\newtheorem{prop}{Proposition}[section]
\newtheorem{thm}[prop]{Theorem}
\newtheorem{cor}[prop]{Corollary}
\newtheorem{rem}[prop]{Remark}
\newtheorem{stuff}{\rm \hspace{-4pt}}[subsection]
\newcommand{\grass}{{\rm Grass}}
\newcommand{\chow}{{\rm Chow}}
\begin{document}

\title{On the finiteness theorem for rational maps\\
on a variety of general type}
\author{Lucio Guerra \and Gian Pietro Pirola}
\date{}

\maketitle

\begin{abstract}
{\noindent The dominant rational maps of finite degree from a fixed variety 
to varieties of general type, up to birational isomorphisms, form a finite set. 
This has been known as the Iitaka-Severi conjecture, and is nowdays 
an established result, in virtue of some recent advances in the theory of 
pluricanonical maps. We study the question of finding some effective estimate 
for the finite number of maps, and to this aim we provide some update and refinement 
of the classical treatment of the subject. \medskip

\noindent
Mathematics Subject Classification:   14E05, 14Q20

\noindent
Keywords: rational maps, pluricanonical maps, varieties of general type,
canonical volume, Chow varieties  }
\end{abstract}

\section*{Introduction}

Let $X$ be an algebraic variety of general type, over the complex field. 
The dominant rational maps of finite degree
$X \dasharrow Y$ to varieties of general type, 
up to birational isomorphisms $Y \dasharrow Y'$,
form a finite set. This has been known as the Iitaka-Severi conjecture,
and is nowdays an established result. 
The approach introduced by Maehara \cite{M}
obtains the full range of applicability in virtue of some recent
advances in the theory of pluricanonical maps, 
originating with the work of Siu \cite{S1}, \cite{S2}, 
and successively due to Tsuji \cite{Tsu},
Hacon and McKernan \cite{HK}, and Takayama \cite{Tak}, \cite{Tak2}. 
  
In this paper we study the question of finding some effective
estimate for the finite number of maps in the theorem,
in the same line as works of
Catanese \cite{Cat}, Bandman and Dethloff \cite{BD},
Heier \cite{H}, and
the article \cite{Gu} by the first author.
It is worth mentioning that 
there is another approach to the question,
developed by Naranjo and the second author in \cite{NP},
in a slightly less general setting.
To the aim of obtaining effective results, we moreover provide 
some update and refinement of the classical treatment
of the subject,
according to the recent progress.

The approach consists, as usual in this kind of
questions, of two main steps,
that may be called: rigidity and boundedness,
from which the finiteness theorem follows.
We shortly describe the main points that represent the
contribution of the present paper.

We bring the rigidity theorem to a general form (theorem \ref{rigidity}), 
avoiding certain technical restrictions that are in \cite{M},
and relying on \cite{HK} and \cite{S1}.
We point out that generic rigidity extends to limits
(proposition \ref{primitive}).
We show that bounds for the degree of a map and of its graph 
(see \S \ref{boundedness}) are naturally obtained in terms of the 
canonical volume, the new invariant 
arising in the asymptotic theory of divisors, cf. Lazarsfeld's book \cite{L}. 
Finally we discuss the structure of the proof of the finiteness theorem.
We propose a new argument,  relating the finite number of rational  maps
and the irreducible components of a certain bunch of
(subvarieties of) Chow varieties
(theorem \ref{finite number}),
in the perspective of obtaining an effective bound
in terms of the complexity of these Chow varieties
(see proposition \ref{effective}). 
Also in this part we use some recent powerful results of minimal model
theory, namely \cite{HK} and \cite{Tak2}.
The computations of complexity are explained in \S \ref{chowcomplexity}.
\bigskip

\small \noindent {\em Acknowledgements.}
The first author is partially supported by:
Finanziamento Ricerca di Base 2007 Univ. Perugia.
The second author is partially supported by:
1) PRIN 2007 {\em ``Spazi di moduli e teorie di Lie"}; 
2) Indam (GNSAGA);
3) FAR 2006 (PV):{\em ``Variet\`{a} algebriche, calcolo
algebrico, grafi orientati e topologici"}.
\normalsize 

\section{Results on pluricanonical maps}
We collect here the recent results in the theory of
pluricanonical  maps
that will be used in the paper.

\subsection{Volume of a big divisor} \label{vol}
Recall that the volume of a divisor is defined as
$$\mbox{\rm vol}_X(D) = \lim_m \; h^0(X,mD)/(m^n/n!)$$ 
and for a big divisor also there is the description 
$$\mbox{\rm vol}_X(D) = \lim_m \; (mD)^{[n]}/m^n$$ 
in terms of moving self-intersection
numbers $D^{[n]}$ (cf. Lazarsfeld \cite{L},  II.11.4, p. 303). 

The following observation will be useful:
if $X \dasharrow X' \subset \mathbb P^{m}$ is a birational embedding,
and $D$ is the pullback on $X$ of the general hyperplane of $\mathbb P^{m}$, then
$$\deg X' \leq \mbox{\rm vol}_X(D)$$
(cf. \cite{HK}, p. 5).

\subsection{Pluricanonical embeddings} \label{plumap}
A recent achievement in the theory of pluricanonical maps is the following
theorem of uniform pluricanonical birational embedding, cf.
Tsuji \cite{Tsu},  Hacon-McKernan \cite{HK},
Takayama \cite{Tak}.
\begin{thm} \label{HKT}
For any dimension $n$, there is some positive integer $r_n$ such that: for
every $n$-dimensional
variety $V$ of general type the multicanonical divisor
$r_nK_V$ defines a birational embedding $V \dashrightarrow V' \subset
\mathbb P^M$.
\end{thm}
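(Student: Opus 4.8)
The plan is to argue by induction on the dimension $n$, carrying along a second assertion that turns out to be inseparable from the first: that the canonical volumes $\mathrm{vol}_V(K_V)$ of $n$-dimensional varieties of general type are bounded below by a positive constant $v_n$. The two statements feed one another, since a uniform birationality bound in dimensions $<n$ produces the volume gap in dimension $n$, which in turn yields the uniform $r_n$; so I would run a single induction carrying both. The base case $n=1$ is classical: $3K_C$ is very ample on every curve of genus $\geq 2$.

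For the inductive step, fix $V$ and two general points $x,y$, and seek a section of $mK_V$ separating them for some $m$ controlled independently of $V$. The first move exploits the volume through the asymptotic estimate $h^0(V,mK_V)\sim \mathrm{vol}_V(K_V)\,m^n/n!$: once $m^n\,\mathrm{vol}_V(K_V)$ is large enough there are sections of $mK_V$ vanishing to high order at both $x$ and $y$. After rescaling this furnishes an effective $\mathbb{Q}$-divisor $D\sim_{\mathbb Q}\lambda K_V$ so singular at the two points that $(V,D)$ is not klt there; the standard tie-breaking perturbation then isolates a unique minimal non-klt centre $W$ through $x$ avoiding $y$.

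The core of the argument is dimension reduction along $W$. When $W$ is a point, Nadel vanishing $H^1\!\left(V,\,\mathcal{J}(V,D)\otimes(K_V+\lceil D\rceil)\right)=0$ lets one lift a section nonvanishing at the isolated centre, separating $x$ from $y$ at once. When $\dim W>0$, Kawamata subadjunction writes $(K_V+D)|_W \equiv K_W+B_W$ with $B_W\geq 0$, realizing the restriction as an adjoint class on the lower-dimensional $W$, which is again of general type; the inductive birationality bound together with the volume gap $v_{\dim W}$ then supplies a separating section \emph{on} $W$. The decisive and genuinely hard step is the extension: one must lift this section from $W$ back to $V$. Here the extension theorems of Siu \cite{S1} and Hacon--McKernan \cite{HK} intervene, and the bookkeeping is delicate, since the restricted pair must be made positive enough (this is what forces the volume lower bound, hence the coupling) while the denominators entering the passage to $\mathbb Q$-divisors are held uniformly bounded so that $m$, and therefore $r_n$, stays independent of $V$.

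I expect this extension step, entangled with the simultaneous control of the volume gap, to be the main obstacle: it is precisely the ingredient demanding the full strength of recent minimal model theory and resists reduction to any routine estimate. For this reason we record the statement as an established input, citing \cite{Tsu}, \cite{HK}, \cite{Tak}.
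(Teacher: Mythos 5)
The paper itself offers no proof of this statement: it is quoted as an established result, with citations to Tsuji, Hacon--McKernan and Takayama, which is exactly what your final paragraph does, so your treatment and the paper's coincide in regarding the theorem as an external input. Your preceding sketch is a broadly faithful outline of the strategy in those references (induction on dimension coupled with a volume lower bound, non-klt centres via tie-breaking, Nadel vanishing, subadjunction, and Siu-type extension), with the one caveat that the minimal non-klt centre $W$ need not itself be of general type --- handling this is one of the genuine difficulties the cited papers must overcome --- but since you defer precisely that step to the references, nothing is claimed that is not in fact established there.
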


\begin{rem} \em
For the theorem we also have a bound 
$$\deg V' \leq  d_{V},$$  
arising from \S \ref{vol}, take $d_{V} = {\rm vol} (r_{n}K_{V})$.
Moreover from elementary geometry 
we also have a bound  $$M \leq M_{V},$$ 
take for instance $M_{V} = d_{V}+n-1$.
\end{rem}

\subsection{Pluricanonical maps in a family} 
Let $q: Y \rightarrow T$ be a projective morphism of nonsingular varieties. Consider
the relative pluricanonical bundle $\omega_{Y/T}^{\otimes m}$ on $Y$, and
the coherent sheaf $q_*(\omega_{Y/T}^{\otimes m})$ on $T$.
If $q$ is a smooth morphism,
the restriction of $\omega_{Y/T}^{\otimes m}$ to the fibre $Y_t$ 
is the pluricanonical bundle $\omega_{Y_t}^{\otimes m}$,  
and the  natural map 
$q_*(\omega_{Y/T}^{\otimes m}) \otimes k(t) \rightarrow
H^0(Y_t,\omega_{Y_t}^{\otimes m})$
is an isomorphism for general $t \in T$. 
Consider the induced rational map of families
$$Y \dasharrow {\mathbb P}(q_*(\omega_{Y/T}^{\otimes m})).$$ 
If $q$ is a smooth morphism this is, generically,
the family of $m$-canonical maps of fibres.
In this setting the following
theorem of Kawamata \cite{K} holds:
\begin{thm} [\bf  semipositivity] \label{semipositivity}
The sheaf $q_*(\omega_{Y/T}^{\otimes m})$ is nef, for a family of
connected fibres, provided that $T$ is a curve.
\end{thm}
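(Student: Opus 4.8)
The plan is to reduce the general case to the case $m=1$ by a cyclic covering, and to settle $m=1$ by Hodge theory. Since $T$ is a curve, I would first normalize the situation. After replacing $T$ by a smooth projective compactification, extending the family, and performing a semistable reduction (all achieved by a finite base change, under which nefness of a sheaf on a curve is both preserved and reflected), I may assume that $T$ is projective, that $q$ is smooth over a dense open $T^{\circ}\subseteq T$ with unipotent local monodromies at the finitely many boundary points, and that $q_*(\omega_{Y/T}^{\otimes m})$ is locally free. For a vector bundle $E$ on a smooth projective curve, nefness is equivalent to the nonnegativity of $\deg Q$ for every quotient bundle $Q$, equivalently to the nefness of $\mathcal{O}_{\mathbb{P}(E)}(1)$; so it suffices to establish $\deg Q\ge 0$ for all such $Q$.

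Next I would carry out the reduction $m\rightsquigarrow 1$. With $L=\omega_{Y/T}^{\otimes(m-1)}$, a general divisor $B$ in the linear system $|L^{\otimes m}|$ (enlarged by a twist pulled back from $T$ if needed to guarantee enough sections and a sufficiently general, hence mildly singular, $B$) determines a degree-$m$ cyclic cover $\pi:Z_0\to Y$ with $\pi_*\mathcal{O}_{Z_0}=\bigoplus_{i=0}^{m-1}L^{-i}$ and $\omega_{Z_0/T}=\pi^*(\omega_{Y/T}\otimes L^{\otimes(m-1)})$. Writing $p=q\circ\pi$, the projection formula then splits $p_*\omega_{Z_0/T}$ into the $\mu_m$-eigensheaves $\bigoplus_{j=0}^{m-1}q_*(\omega_{Y/T}\otimes L^{\otimes j})$, one of which, for $j=1$, is exactly $q_*(\omega_{Y/T}^{\otimes m})$. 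Passing to a resolution $Z\to Z_0$, and using that the cover has only canonical singularities so that the canonical sheaf pushes forward unchanged, replaces $Z_0$ by a nonsingular family $p:Z\to T$ without altering $p_*\omega_{Z/T}$. Since a direct summand of a nef bundle on a curve is again nef (a summand is at once a subbundle and a quotient bundle, and a quotient of a quotient is a quotient), the nefness of $q_*(\omega_{Y/T}^{\otimes m})$ follows from that of $p_*\omega_{Z/T}$, reducing us to the case $m=1$.

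Finally I would treat the Hodge-theoretic core. Over $T^{\circ}$ the sheaf $p_*\omega_{Z/T}$ is the top piece $F^{n}$ of the Hodge filtration of the polarized variation of Hodge structure $R^{n}p_*\mathbb{C}$, where $n=\dim Z-\dim T$; and on all of $T$ it coincides with Deligne's canonical extension of $F^{n}$, the unipotency of the monodromy ensuring that this is the correct extension. Equipping $F^{n}$ with the Hodge metric, Griffiths' curvature formula shows that the curvature of this top Hodge bundle is semipositive, while Schmid's nilpotent orbit theorem records the precise logarithmic growth of the metric at the boundary. Integrating the resulting semipositive first Chern form over $T$, and checking that the boundary contribution does not lower the degree, yields $\deg Q\ge 0$ for every quotient $Q$; this is Fujita's semipositivity theorem, and completes the argument.

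The main obstacle is precisely this last analytic step: one must show that the mild, logarithmic degeneration of the Hodge metric at the points of $T\setminus T^{\circ}$ cannot create negative degree, so that curvature semipositivity on the open locus genuinely upgrades to nefness of the canonical extension on the compact curve $T$. A secondary point requiring care is that the eigensheaf identification of the covering step survives normalization and resolution, so that the summand to which we reduce is honestly $p_*\omega_{Z/T}$ for the smooth family $p$.
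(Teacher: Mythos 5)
The paper does not actually prove this statement: it is quoted as Kawamata's semipositivity theorem with a citation to \cite{K}, and is used downstream as a black box in the proof of the rigidity theorem. So there is no internal proof to compare against, and your sketch must stand on its own as a reconstruction of the Fujita--Kawamata--Viehweg argument. Your overall strategy is indeed the standard one --- semistable reduction to get unipotent monodromy, then for $m=1$ the identification of $p_*\omega_{Z/T}$ with the canonical extension of the top Hodge bundle, Griffiths curvature semipositivity, Schmid's estimates at the boundary, giving $\deg Q \geq 0$ for every quotient --- and that part of your sketch is essentially accurate.

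The reduction from $m$ to $1$, however, has two genuine gaps, and they are exactly where the real work in Viehweg's proof of the $m\geq 2$ case lies. First, the twist: to perform a degree-$m$ cyclic cover you need $B \in |(L\otimes q^*A_0)^{\otimes m}|$ for some line bundle $A_0$ on $T$, and then the $j=1$ eigensheaf is $q_*(\omega_{Y/T}^{\otimes m})\otimes A_0$, not $q_*(\omega_{Y/T}^{\otimes m})$; your argument as written proves nefness only of the twisted sheaf, and you never remove the twist. This is repairable --- pull back by finite covers $T'\to T$ of degree $e$, run the construction on $T'$ with a degree-one twist, and conclude that every quotient $Q$ of the original sheaf satisfies $\deg Q \geq -\mathrm{rk}(Q)/e$ for all $e$, hence $\deg Q\geq 0$ --- but the step has to be there. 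Second, and more seriously, the singularities of $B$: a twist pulled back from $T$ changes nothing fiberwise, so if the fiberwise systems $|\omega_{Y_t}^{\otimes m(m-1)}|$ have base points (or vanish on some fibers), a general member $B$ can be arbitrarily singular along them. Then the cyclic cover does not have canonical singularities, and for a resolution $Z\to Z_0$ the direct image $p_*\omega_{Z/T}$ is not the full eigensheaf sum: the summands acquire multiplier-ideal-type corrections, and the summand you want can be strictly smaller than $q_*(\omega_{Y/T}^{\otimes m})$. Your parenthetical ``sufficiently general, hence mildly singular'' is precisely the unjustified point; it is not a secondary issue of compatibility with resolution, as your closing paragraph suggests, but the central difficulty. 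Viehweg's proof circumvents it with the fiber-product trick (passing to a desingularized $r$-fold fiber power, $r=\mathrm{rk}\,q_*\omega_{Y/T}^{\otimes m}$, where the branch divisor is built from the determinant of the sheaf itself) combined with an induction in the weak-positivity formalism; some such device is indispensable, and without it your reduction $m\rightsquigarrow 1$ does not go through.
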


Another fundamental property of relative pluricanonical sheaves 
is the following 
theorem of Siu, see \cite{S1} and \cite{S2}:
\begin{thm}[\bf invariance of plurigenera]  \label{Siu}
In a smooth projective family every plurigenus is constant. \end{thm}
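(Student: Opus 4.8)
The statement is local on the base and deformation-theoretic, so the plan is to reduce it to an extension problem and then solve that problem by $L^2$ methods. Shrinking $T$, I may assume $T=\Delta$ is a disc with centre $0$ and central fibre $Y_0=q^{-1}(0)$. By the upper semicontinuity of $t\mapsto h^0(Y_t,mK_{Y_t})$, the generic value $p$ is the minimum; moreover a global section of $mK_{Y/\Delta}$ that vanishes on a general fibre vanishes on a dense open subset of $Y$, hence is zero, so the restriction $H^0(Y,mK_{Y/\Delta})\to H^0(Y_t,mK_{Y_t})$ to a general fibre is injective and $h^0(Y,mK_{Y/\Delta})\le p$. It therefore suffices to prove that the restriction $H^0(Y,mK_{Y/\Delta})\to H^0(Y_0,mK_{Y_0})$ is surjective: this gives $h^0(Y_0,mK_{Y_0})\le h^0(Y,mK_{Y/\Delta})\le p$, which together with upper semicontinuity forces equality with $p$, and connectedness of $T$ propagates constancy. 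Since $Y_0$ is a fibre its normal bundle is trivial, so adjunction yields $K_{Y_0}=K_{Y/\Delta}|_{Y_0}$, and the goal becomes: every $s_0\in H^0(Y_0,mK_{Y_0})$ extends to a holomorphic section of $mK_{Y/\Delta}$ over $Y$ (after possibly shrinking $\Delta$).

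Second, the extension is produced by the Ohsawa--Takegoshi $L^2$ extension theorem. Writing $mK_{Y/\Delta}=K_Y+(m-1)K_{Y/\Delta}$, I would apply the theorem with line bundle $L=(m-1)K_{Y/\Delta}$: given a singular Hermitian metric on $L$ of semipositive curvature current, relative to which $s_0$ has finite $L^2$ norm on $Y_0$, the theorem extends $s_0$ to a section of $K_Y+L=mK_{Y/\Delta}$ over $Y$ with an explicit $L^2$ bound. The difficulty, and the crux of the whole argument, is the circularity: to extend a pluricanonical section I need a semipositive metric on $(m-1)K_{Y/\Delta}$, yet the only natural source of such metrics is pluricanonical sections themselves --- precisely the objects I am trying to construct.

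Third, I would break the circularity by Siu's bootstrap. Fix an auxiliary very ample line bundle $A$ on $Y$ with a smooth positive metric, and prove by induction on $\ell$ that every section of $\ell K_{Y/\Delta}+A$ over $Y_0$ extends to $Y$. The base case $\ell=0$ follows at once from Ohsawa--Takegoshi using the positive metric on $A$. For the inductive step, a basis $\{S_j\}$ of already-extended sections of $\ell K_{Y/\Delta}+A$ defines, through the weight $\log\sum_j|S_j|^2$, a singular metric of semipositive curvature on $\ell K_{Y/\Delta}+A$; feeding this into the extension theorem extends the sections of $(\ell+1)K_{Y/\Delta}+A=K_Y+(\ell K_{Y/\Delta}+A)$, the $L^2$ bound on $Y_0$ being automatic because the metric is built from a spanning set. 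To remove the twist and reach $mK_{Y/\Delta}$ itself, I would extend the sections $s_0^{\otimes k}\otimes\tau$ of $kmK_{Y/\Delta}+A$ (with $\tau$ a fixed section of $A$), take $k$-th roots $|\cdot|^{1/k}$ of the extensions, and let $k\to\infty$; the contribution of $A$ tends to $1$, and the limit produces a genuine semipositive singular metric on $(m-1)K_{Y/\Delta}$ for which $s_0$ is $L^2$ on $Y_0$. A final application of Ohsawa--Takegoshi then extends $s_0$ and closes the reduction of the first step. The hardest points throughout are the uniformity of the $L^2$ estimates (constants independent of the twisting data, needed so that the limit in $k$ converges) and the careful propagation of the norm bounds along the induction; these analytic estimates are exactly what make the circularity breakable.
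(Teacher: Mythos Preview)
The paper does not actually prove this theorem: it is stated as a result of Siu, with references to \cite{S1} and \cite{S2}, and is used as a black box in the rest of the article. So there is no ``paper's own proof'' to compare against; what you have written is, in outline, precisely Siu's original argument via Ohsawa--Takegoshi $L^{2}$ extension and the inductive bootstrap with an auxiliary ample twist. In that sense your proposal is correct in spirit and matches the cited source rather than anything the paper itself contributes.

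One technical slip in your reduction step is worth flagging. You write that restriction to a general fibre is injective and conclude $h^{0}(Y,mK_{Y/\Delta})\le p$. Over the non-compact base $\Delta$ the space $H^{0}(Y,mK_{Y/\Delta})$ is infinite-dimensional (it is a module over $\mathcal O(\Delta)$), so that inequality is false as stated. The clean way to phrase the reduction is sheaf-theoretic: $q_{*}\,\omega_{Y/\Delta}^{\otimes m}$ is torsion-free, hence locally free of rank $p$ on $\Delta$, and surjectivity of the extension map means the base-change morphism $q_{*}\,\omega_{Y/\Delta}^{\otimes m}\otimes k(0)\to H^{0}(Y_{0},mK_{Y_{0}})$ is onto, forcing $h^{0}(Y_{0},mK_{Y_{0}})\le p$. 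With that correction your first paragraph is a valid reduction, and the remaining two paragraphs are an accurate sketch of Siu's analytic machinery, including the genuine difficulty (uniformity of the $L^{2}$ constants in the limit $k\to\infty$) that you correctly identify as the heart of the matter.
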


In the theory of pluricanonical maps,
the need to allow singular varieties arises,
and an outstanding role is played by the
canonical singularities, cf.  \cite{R}.
For a possibly singular variety,
the plurigenus of any resolution of singularities is
independent of the resolution, and is called the
plurigenus of the variety.

In this general situation, one still has the following results.
In a  projective family of varieties, over a smooth curve, 
every plurigenus is a lower semicontinuous function;
if the varieties in the family have canonical singularities,
then every plurigenus is constant;
see Takayama \cite{Tak2}. 
The invariance of plurigenera is also known
for flat families (of any dimension)
of canonical singularities of general type,
see Kawamata \cite{K2}.

Moreover we point out 
the following result, for which a proof will be
given in the next section.

\begin{thm}[\bf invariance of the general type] \label{generaltype}
In a  projective family of varieties, over a smooth curve, 
the varieties of general type form an open subfamily,
in the Zariski topology.
\end{thm}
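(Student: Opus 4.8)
The plan is to describe general type through the dimension of pluricanonical images, and to combine the uniform birational embedding of Theorem \ref{HKT} with the lower semicontinuity of plurigenera recalled above. Write $q\colon Y\to T$ for the family, with $T$ an irreducible smooth curve and general fibre of dimension $n$, and set $U=\{t\in T: Y_t \text{ is of general type}\}$. Since $T$ is a curve, a subset is open exactly when it is empty or cofinite, so it suffices to prove two implications: first, that if a single fibre is of general type then so is the generic fibre $Y_\eta$; second, that if $Y_\eta$ is of general type then all but finitely many fibres are. Throughout I use that, for a variety $V$ of dimension $n$, being of general type means $\kappa(V)=n$, equivalently that the $m$-th plurigenus $P_m(V)=h^0(V,mK_V)$ grows like $m^n$, equivalently that some pluricanonical image has dimension $n$.

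For the first implication I would argue by semicontinuity. Fixing $m$ and using that $t\mapsto P_m(Y_t)$ is lower semicontinuous on $T$, the generic value dominates every special value, so $P_m(Y_\eta)\ge P_m(Y_{t_0})$ for every $t_0$. If some $Y_{t_0}$ is of general type, then $P_{mr}(Y_{t_0})$ grows like $m^n$ as $m\to\infty$ (with $r=r_n$), forcing $P_{mr}(Y_\eta)$ to grow like $m^n$ as well, hence $\kappa(Y_\eta)=n$. Contrapositively, if $Y_\eta$ is not of general type then $U=\emptyset$, which is open.

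For the second implication I would set $r=r_n$ and form the relative $r$-canonical map $\Phi\colon Y\dasharrow \mathbb P(q_*(\omega_{Y/T}^{\otimes r}))$ over $T$, after resolving $Y$ so as to assume it smooth (this alters at most finitely many fibres and preserves the birational class of the rest). Restricting to the generic fibre, Theorem \ref{HKT} makes $\Phi|_{Y_\eta}$ a birational embedding, so $\Phi$ is dominant onto an image $W=\overline{\Phi(Y)}$ with $\dim W=n+1$ and $W\to T$ of generic fibre dimension $n$. By the theorem on dimension of fibres, every fibre $W_t$ of $W\to T$ then satisfies $\dim W_t\ge n$. Passing to the dense open $T^\circ\subseteq T$ over which $q$ is smooth and base change for $q_*(\omega_{Y/T}^{\otimes r})$ holds, the restriction $\Phi|_{Y_t}$ is the genuine $r$-canonical map of the smooth variety $Y_t$, and its image is dense in $W_t$ for all but finitely many $t$; hence $\dim\overline{\phi_{rK_{Y_t}}(Y_t)}\ge n$. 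Since the dimension of any pluricanonical image is bounded above by $\kappa(Y_t)$, this gives $\kappa(Y_t)\ge n$, so $Y_t$ is of general type for cofinitely many $t$.

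The main obstacle is this second implication, and precisely the passage from the generic fibre to the special ones: a priori the relative pluricanonical sheaf need not commute with base change and the fibres may be singular, so the image of $Y_t$ under $\Phi$ could drop in dimension. The resolution is to play off two opposite estimates, namely the lower bound $\dim\overline{\Phi(Y_t)}\ge n$ coming from the fibre-dimension theorem applied to $W\to T$ against the upper bound $\dim\overline{\phi_{rK_{Y_t}}(Y_t)}\le\kappa(Y_t)$; it is exactly here that the uniform constant $r_n$ of Theorem \ref{HKT} and the reduction to the smooth locus do the essential work. The first implication, by contrast, is a soft consequence of the lower semicontinuity of plurigenera.
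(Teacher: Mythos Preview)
Your argument is correct, but it takes a different route from the paper's. The paper argues directly from a single special fibre: given $Y_{0}$ of general type, resolve the total space so that the strict transform $Z$ of $Y_{0}$ is smooth, and invoke Takayama's extension theorem (Theorem~\ref{extension}) to see that every section of $mK_{Z}$ lifts to $\pi_{*}\mathcal O_{V}(mK_{V})_{t_{0}}$; since this sheaf is torsion free on a smooth curve it is locally free, so its rank is constant and bounded below by $P_{m}(Z)\ge cm^{n}$, and for nearby $t$ this rank injects into $H^{0}(V_{t},mK_{V_{t}})$, forcing $Y_{t}$ to be of general type. You instead factor through the generic fibre, using lower semicontinuity of plurigenera for the step $Y_{t_{0}}\Rightarrow Y_{\eta}$ and then a fibre-dimension argument built on Theorem~\ref{HKT} for $Y_{\eta}\Rightarrow Y_{t}$. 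Both approaches ultimately rest on Takayama's extension results, but the paper's is more self-contained and does not need the uniform constant $r_{n}$. Your second implication, incidentally, can be shortened: once you have resolved $Y$ and passed to the open set $T^{\circ}$ over which $q$ is smooth, Siu's invariance of plurigenera (Theorem~\ref{Siu}) already gives $P_{m}(Y_{t})=P_{m}(Y_{\eta})$ for every $t\in T^{\circ}$, so all such $Y_{t}$ are of general type without any appeal to Theorem~\ref{HKT} or to dimensions of pluricanonical images.
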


\subsection{Extension of differentials}

The results in the previous section
on the variation of plurigenera are all based on 
the technique of extension of differentials,
from a special fibre to the total space of the family.
The following general result, for one-dimensional families,
is due to Takayama \cite{Tak2},
we quote it in a slightly less general form,
suitable for our purposes.

\begin{thm} \label{extension}
Let $\pi:V \rightarrow S$ be a projective morphism
of nonsingular varieties, with $S$ a curve,
and let $Z$ be an irreducible component of some fibre $V_{0} = \pi^{-1}(t_{0})$,
also a nonsingular variety.
Assume moreover that $\pi$ has connected fibres.
Then  the restriction map 
$$\pi_{*}\mathcal O_{V}(m K_{V})_{\,t_{0}} \ \longrightarrow \
H^{0}(Z,\mathcal O_{Z}(mK_{Z}))$$
is surjective for every  $m>0$.
\end{thm}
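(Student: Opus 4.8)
The plan is to reduce the stalk surjectivity to a concrete pluricanonical \emph{extension} problem on the total space, and then to invoke the analytic $L^2$-extension machinery of Ohsawa–Takegoshi in the inductive form devised by Siu, following the refinement of Takayama that tolerates a reducible and possibly singular central fibre.

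First I would translate the statement about the stalk into the extension of individual sections. Since $S$ is a smooth curve, the fibre $V_{0} = \pi^{*}(t_{0})$ is a Cartier divisor on $V$ whose conormal bundle $\mathcal O_{V}(-V_{0})|_{V_{0}} = \pi^{*}\mathcal O_{S}(-t_{0})|_{V_{0}}$ is trivial; hence by adjunction $\mathcal O_{V}(K_{V})|_{V_{0}} \cong \omega_{V_{0}}$ and $\mathcal O_{V}(mK_{V})|_{V_{0}} \cong \omega_{V_{0}}^{\otimes m}$. Restricting further to the smooth component $Z$ and using that $Z$ is nonsingular, one obtains the natural map $\omega_{V_{0}}^{\otimes m}|_{Z} \to \omega_{Z}^{\otimes m} = \mathcal O_{Z}(mK_{Z})$. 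By definition of the stalk, an element of $\pi_{*}\mathcal O_{V}(mK_{V})_{t_{0}}$ is represented by a section of $mK_{V}$ over $\pi^{-1}(\Delta)$ for a small disc $\Delta \ni t_{0}$, and its image is obtained by restriction. Thus the asserted surjectivity is equivalent to the statement that every pluricanonical section on $Z$ is the restriction of a section of $mK_{V}$ defined on a neighbourhood $\pi^{-1}(\Delta)$ of the fibre.

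To produce such an extension I would run Siu's bootstrapping scheme. Fix an ample line bundle $A$ on $V$, with a smooth metric of positive curvature, chosen positive enough that $A$ and $K_{V}+A$ are globally generated and that the bottom step of the induction holds (there $B = A$ carries its smooth positive metric, with trivial multiplier ideal, so Ohsawa–Takegoshi extends sections of $(K_{V}+A)|_{Z}$ directly, the normal direction being supplied by the pullback of a local coordinate on $S$ centred at $t_{0}$). The inductive claim is that, once all sections of $mK_{V}+A$ have been extended from the central fibre, the extended sections define a singular metric on $(m-1)K_{V}+A$ of semipositive curvature whose multiplier ideal is trivial along $Z$; taking $B = (m-1)K_{V}+A$ and $L = K_{V}+B = mK_{V}+A$, a further application of Ohsawa–Takegoshi extends the sections of $(m+1)K_{V}+A$, the section to be extended being included so that the base locus misses $Z$.

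The main obstacle is the removal of the auxiliary twist together with the construction of the metric of correct curvature. The extensions are produced systematically with a fixed summand $A$; dividing the resulting metric by $m$ yields semipositive curvature on $K_{V}+\tfrac{1}{m}A$, and the technical heart is Siu's uniform $L^{2}$ estimate, controlling the norms along the iteration so that, letting $m\to\infty$, one obtains a genuine semipositive metric on $K_{V}$ with trivial multiplier ideal along $Z$; a final application of Ohsawa–Takegoshi then extends honest sections of $mK_{V}$. A second delicate point, and the reason one needs Takayama's version rather than Siu's original invariance of plurigenera, is that the central fibre may here be reducible and $Z$ only one of its components: the extension must be set up with multiplier ideals adapted to the singular scheme $V_{0}$, so that sections living on the smooth component $Z$ extend across its intersection with the remaining components.
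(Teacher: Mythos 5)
The first thing to say is that the paper contains no proof of theorem \ref{extension} at all: it is quoted from Takayama \cite{Tak2} (``we quote it in a slightly less general form''), and its only role in the paper is the derivation of theorem \ref{generaltype}. So your attempt has to be measured against Takayama's own argument, and in outline you identify that argument correctly: Siu-type two-bundle induction through Ohsawa--Takegoshi extension, an auxiliary ample twist $A$ removed in the limit by the uniform $L^2$ estimate, and extra care because $Z$ is only one component of a possibly non-reduced fibre. As a summary of the known strategy this is fair; but two steps of the sketch, as written, are concretely wrong or beg the question.

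(i) The adjunction reduction is stated in the wrong direction: there is no natural map $\omega_{V_0}^{\otimes m}|_Z \to \omega_Z^{\otimes m}$. Write $V_0 = aZ + D$ with $D$ supported on the other components. In the reduced case $a=1$, triviality of $\mathcal O_V(V_0)|_{V_0}$ gives $\mathcal O_Z(Z) \cong \mathcal O_Z(-D|_Z)$, hence by adjunction $(mK_V)|_Z \cong \mathcal O_Z(mK_Z + m\,D|_Z)$: the restriction of a section of $mK_V$ is a pluricanonical form on $Z$ with poles of order up to $m$ along the double locus $Z \cap D$, and the comparison map goes the other way, $\mathcal O_Z(mK_Z) \hookrightarrow \mathcal O_V(mK_V)|_Z$, the theorem asserting that the image of restriction contains the subspace $H^0(Z,mK_Z)$. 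When $a>1$, $-Z|_Z$ is only a $\mathbb Q$-multiple of the effective class $D|_Z$, so even this inclusion requires an argument. Your eventual target (extend every element of $H^0(Z,mK_Z)$ over $\pi^{-1}(\Delta)$) is the right one, but the non-existent map hides exactly the difficulty the theorem resolves: producing extensions whose restriction to $Z$ is pole-free along $Z\cap D$. (ii) The base of your induction is not ``direct'': the Ohsawa--Takegoshi datum is the section $\pi^*t$ whose zero divisor is all of $V_0$, not $Z$, and a section of $(K_V+A)|_Z$ need not even extend holomorphically to $V_0$, since its values along $Z\cap D$ would have to propagate into the other components (and for $a>1$ the bundles $(K_V+A)|_Z$ and $K_Z+A|_Z$ differ). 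Overcoming precisely this --- via a normal crossing model and multiplier ideals that annihilate the contribution of the other components --- is the technical content of \cite{Tak2}, which you acknowledge only at the end as a ``delicate point'' while the iteration is set up as if extension from the single component were the standard statement. Deferring both this and the uniform $L^2$ estimate to ``Takayama's version'' of the machinery makes the argument circular unless you invoke the internal lemmas of \cite{S1}, \cite{S2}, \cite{Tak2} rather than the theorem being proved; as an outline your proposal is on a par with the paper, which simply cites \cite{Tak2}, but as a proof it has these genuine gaps.
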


Using the extension theorem, we may
give a proof of theorem \ref{generaltype}.

\begin{proof}
Let $T$ be a nonsingular curve, and
let $q: Y \rightarrow T$ be a family of projective varieties,
i.e. a projective morphism with reduced irreducible fibres $Y_t$ 
of the same dimension, say $n$.
We have to show that:
if some fibre $Y_{0}$ is of general type then
the nearby fibres $Y_{t}$ are of  general type.

Consider some resolution of singularities
$\mu: V \rightarrow Y$ such that the strict transform $Z = Y_{0}'$
is smooth.  So $Z$ is of general type. Then 
$\dim H^{0}(Z,mK_{Z}) \geq cm^{n}$ for $m \gg 0$.

From theorem  \ref{extension} we have that
the restriction homomorphism
$$\begin{array}{ccc}
\pi_{*}\mathcal O_{V}(mK_{V})_{\,t_{0}} & \longrightarrow & 
H^{0}(Z, mK_{Z}) 
\end{array}$$
is surjective. 

The image $\pi_{*}\mathcal O_{V}(mK_{V})$ is a 
torsion free coherent sheaf on the smooth curve $T$,
hence it is a locally free sheaf.
So the dimension of 
$\pi_{*}\mathcal O_{V}(mK_{V}) \otimes k(t)$
is constant.
In $t_{0}$ this dimension  is  $\geq cm^{n}$, 
by what we have seen above.
\newcommand{\localmentelibero}
{let  $f:Y \rightarrow S$ be flat,  $\mathcal F$ on $Y$ be flat over $S$.
if $f_{*} \mathcal{F} \otimes k(t) \rightarrow H^{0}(Y_{t}, \mathcal{F}_{t})$
is surjective then it is an isomorphism, and the same holds in a neighborhood of $t$.
moreover $f_{*} \mathcal{F}$ is locally free in a neighborhood of $t$
[Hartshorne,  p. 290] }

Moreover as the composite map
$V \rightarrow Y \rightarrow T$ is generically smooth
we may assume that
in some neighborhood of $t_{0}$ for every $t \neq t_{0}$
the induced map $V_{t} \rightarrow Y_{t}$ is a
resolution of singularities.
Since $mK_{V}|_{V_{t}} = m K_{V_{t}}$
we have the inclusion
$$\pi_{*} \mathcal O_{V}(mK_{V}) \otimes k(t)  \hookrightarrow 
H^{0}(V_{t}, \mathcal O_{V_{t}}(mK_{V}|_{V_{t}}))
= H^{0}(V_{t}, mK_{V_{t}}).$$
It follows that
$\dim H^{0}(V_{t},mK_{V_{t}}) 
\geq cm^{n}$ for $m \gg 0$,  hence  $Y_{t}$
is of general type. This holds for every $t$
in a neighborhood of $t_{0}$.
\end{proof}

\section{Rigidity}

First step is a theorem of rigidity, which is more precisely a theorem
of birational triviality of families.

\subsection{The rigidity theorem}

\begin{thm} \label{rigidity}
Let $X$ be a projective variety of general type, of dimension $n$. \linebreak
Let $T$ be a smooth variety, 
and let $Y \rightarrow T$ be a smooth  family of 
$n$-dimensional projective varieties $Y_t$ of
general type. Assume that
$f: X\times T \dasharrow Y$ is a family of  dominant rational maps, i.e.
that $f$ restricts to a dominant rational map
$f_t: X \dasharrow Y_t$ for every $t$. 
Then there is $g: Y \dasharrow Y_0 \times T$, a
birational isomorphism over $T$, defined on every $Y_{t}$, such that
$g \circ f$ is  a trivial family $f_0 \times 1: X \times T \dasharrow Y_0 \times T$. 
Therefore all maps $f_t$ are birationally equivalent.
\end{thm}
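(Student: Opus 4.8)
The plan is to fix $m = r_n$ as in Theorem \ref{HKT}, so that for every fibre $Y_t$ the linear system $|mK_{Y_t}|$ defines a birational embedding. The first reduction is to the case $\dim T = 1$: the assignment $t \mapsto W_t$ defined below will be a morphism from $T$ to a Grassmannian, and a morphism of varieties is constant as soon as its restriction to every curve is constant, so it suffices to treat families over a smooth curve. After resolving the indeterminacy of $f$ by blowing up $X \times T$ (which does not change $H^0(X,mK_X)$, a birational invariant) and using that each $f_t$ is dominant and generically finite, pullback of pluricanonical forms gives for each $t$ an injection $f_t^* \colon H^0(Y_t, mK_{Y_t}) \hookrightarrow H^0(X, mK_X)$; write $W_t \subseteq H^0(X, mK_X)$ for its image and $N = \dim H^0(X,mK_X)$. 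By invariance of plurigenera (Theorem \ref{Siu}) the dimension $\dim W_t = P_m(Y_t)$ is constant, so $t \mapsto W_t$ is a morphism $\psi \colon T \to \grass(P_m, H^0(X,mK_X))$, and the goal is to prove $\psi$ constant.

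Next I would globalize the pullback over a complete base. The maps $f_t^*$ fit together into a morphism of sheaves on $T$, namely $q_*(\omega_{Y/T}^{\otimes m}) \to (p_T)_* p_X^* \omega_X^{\otimes m} = H^0(X,mK_X)\otimes\mathcal O_T$, generically injective and fibrewise equal to $f_t^*$. I would choose a smooth projective compactification $T \subseteq \bar T$ and extend the family to a projective morphism $\bar q \colon \bar Y \to \bar T$ of nonsingular varieties with connected fibres (semistable reduction and resolution); by invariance of the general type (Theorem \ref{generaltype}) and the extension theorem (Theorem \ref{extension}) the general-type locus is open and the plurigenus does not drop, so $E := \bar q_*(\omega_{\bar Y/\bar T}^{\otimes m})$ is a torsion-free, hence locally free, sheaf on the smooth curve $\bar T$ of rank $P_m$. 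By the semipositivity theorem (Theorem \ref{semipositivity}) the bundle $E$ is nef, and the sheaf map above extends to a generically injective homomorphism $\phi \colon E \to \mathcal O_{\bar T}^{\,N}$.

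The heart of the argument is then a degree computation on the complete curve $\bar T$. Let $F \subseteq \mathcal O_{\bar T}^{\,N}$ be the saturation of the image of $\phi$, a subbundle of rank $P_m$; since $E \to F$ is generically an isomorphism, $\deg E \le \deg F$. As $F$ is a subbundle of the trivial bundle, the quotient $\mathcal O_{\bar T}^{\,N}/F$ is a quotient of a nef bundle, hence nef, giving $\deg F \le 0$; and $E$ is nef, giving $\deg E \ge 0$. Therefore $\deg E = \deg F = 0$ and $E \xrightarrow{\sim} F$. Dualizing the inclusion yields a surjection $\mathcal O_{\bar T}^{\,N} \twoheadrightarrow F^*$ exhibiting $F^*$ as globally generated with $c_1(F^*) = 0$; the associated morphism $\bar T \to \grass$ pulls the ample Pl\"ucker class back to a numerically trivial class, so it is constant and $F$ is a \emph{constant} subbundle $V \otimes \mathcal O_{\bar T}$ for a fixed $V \subseteq H^0(X,mK_X)$. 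Restricting to $T$ gives $W_t = V$ for all $t$, so $\psi$ is constant.

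Finally I would recover the trivialization. For every $t$ the composite $\phi_{|mK_{Y_t}|} \circ f_t$ equals the $m$-canonical map of $X$ followed by the fixed linear projection $\mathbb P(H^0(X,mK_X)^*) \dashrightarrow \mathbb P(V^*)$; since $\phi_{|mK_{Y_t}|}$ is a birational embedding, $Y_t$ is birational to the image of $X$ under this one fixed map, for every $t$, and these identifications are compatible in the family. This produces $g \colon Y \dashrightarrow Y_0 \times T$, a birational isomorphism over $T$ defined on each $Y_t$, with $g \circ f = f_0 \times 1$, and in particular all $f_t$ birationally equivalent. The main obstacle is the compactification step: extending $Y \to T$ to a nonsingular family with connected fibres over $\bar T$ while guaranteeing that the relative pluricanonical bundle stays locally free of the expected rank and remains nef. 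This is exactly where Theorems \ref{generaltype} and \ref{extension} (and behind them the results of Siu and Takayama) are needed, to prevent the plurigenus from jumping at the boundary.
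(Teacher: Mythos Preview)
Your argument is correct and follows essentially the same route as the paper: set up the map to the Grassmannian via pullback of $m$-canonical forms, reduce to curves, compactify with semistable reduction, and then use Kawamata's semipositivity together with a degree computation on the saturated subbundle of the trivial bundle (your $F$ is exactly the paper's $\mathcal K$) to force the Grassmannian map to be constant. One correction: the ``main obstacle'' you flag is not one---Theorems \ref{generaltype} and \ref{extension} are \emph{not} needed here (the paper does not invoke them in this proof), since on a smooth curve $\bar q_*(\omega^{\otimes m}_{\bar Y/\bar T})$ is automatically locally free with rank determined generically, and the inclusion into its saturation plus semipositivity already gives $0 \le \deg E \le \deg F \le 0$ without any control on the boundary plurigenera; note also that semistable reduction generally requires a finite base change $B \to \bar T$, not merely a compactification, which the paper records explicitly.
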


This is a slightly more general version
than the one in Maehara \cite{M}, 
avoiding some technical restrictions. 
We recall here that two rational maps $f: X \dasharrow Y$ and
$f': X \dasharrow Y'$ are said to be birationally equivalent if
there is a birational isomorphism $g: Y \dasharrow Y'$ such that
$f' = g \circ f$. The proof of the theorem will be given
in the next section.
We point out the following:

\begin{cor} \label{weakrigidity} 
Let $X$ be a projective variety of general type, of dimension $n$. \linebreak
Let $T$ be a smooth variety, 
and let $Y \rightarrow T$ be a  family of 
$n$-dimensional projective varieties $Y_t$ of
general type. Assume that
$f: X\times T \dasharrow Y$ is a family of  dominant rational maps
$f_t: X \dasharrow Y_t$. Then 
almost all maps $f_t$ are birationally equivalent.
\end{cor}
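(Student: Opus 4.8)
The plan is to reduce Corollary \ref{weakrigidity} to Theorem \ref{rigidity} by passing to a dense open subset of $T$ over which the family becomes smooth, after replacing $Y$ by a fibrewise resolution of singularities. Since the assertion concerns only general $t$, I would first reduce to the case that $T$ is irreducible and smooth: replace $T$ by a dense open subset of one of its irreducible components, and then by its smooth locus, which is dense in characteristic zero. The point of the corollary, relative to the theorem, is precisely that here the family $Y \to T$ is not assumed smooth, so the work consists in manufacturing a smooth family birational to it fibre by fibre.

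Because the total space $Y$ and the fibres $Y_t$ may be singular, I would choose a resolution of singularities $\mu : \tilde Y \to Y$ with $\tilde Y$ smooth and $\mu$ birational. Applying generic smoothness (valid in characteristic zero) to the composite $\tilde Y \to T$ yields a dense open $T^{\circ} \subset T$ such that $\tilde Y^{\circ} := \tilde Y \times_{T} T^{\circ} \to T^{\circ}$ is a smooth morphism, hence a smooth family of projective varieties $\tilde Y_t$. Shrinking $T^{\circ}$ further, I may also assume that for every $t \in T^{\circ}$ the induced map $\tilde Y_t \to Y_t$ is birational: the non-isomorphism locus $Z = Y \setminus U$ of $\mu$ is closed of dimension $\leq \dim Y - 1 = n + \dim T - 1$, so for general $t$ its fibre $Z_t$ has dimension $\leq n-1 < \dim Y_t$, and $\mu$ restricts to an isomorphism over the dense open $Y_t \setminus Z_t$. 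Thus each $\tilde Y_t$ is a smooth projective model of $Y_t$, and since being of general type is a birational invariant, $\tilde Y_t$ is of general type.

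Composing each $f_t$ with the birational inverse $Y_t \dasharrow \tilde Y_t$ produces a dominant rational map $\tilde f_t : X \dasharrow \tilde Y_t$ (dominance is preserved under composition with a birational map), and these organize into a family $\tilde f : X \times T^{\circ} \dasharrow \tilde Y^{\circ}$ over $T^{\circ}$. Now Theorem \ref{rigidity} applies directly to the smooth family $\tilde Y^{\circ} \to T^{\circ}$ and the family $\tilde f$, yielding that all the maps $\tilde f_t$, $t \in T^{\circ}$, are birationally equivalent. Since $f_t$ and $\tilde f_t$ are birationally equivalent by construction, it follows that all $f_t$ with $t$ in the dense open set $T^{\circ}$ are birationally equivalent, which is the assertion for almost all $t$.

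I expect the main technical obstacle to be the simultaneous resolution step, namely guaranteeing that after shrinking $T$ the resolution $\mu$ restricts fibrewise to a birational morphism $\tilde Y_t \to Y_t$, so that no fibre is altered birationally and general type is genuinely inherited. In characteristic zero this is secured by generic smoothness together with the dimension count on the non-isomorphism locus sketched above, but it is the only place where one must argue carefully that the relevant bad loci meet the general fibre in codimension at least one; everything else is a formal transfer of the conclusion of Theorem \ref{rigidity} through the birational modification.
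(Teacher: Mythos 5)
Your proof is correct and follows essentially the same route as the paper: resolve the singularities of $Y$, use generic smoothness to obtain a dense open $T' \subset T$ over which the resolved family is smooth with fibres that are nonsingular birational models of the $Y_t$, and then apply Theorem \ref{rigidity} to that restricted family. The only difference is that you spell out the dimension count showing the resolution is fibrewise birational for general $t$, a point the paper leaves implicit.
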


\begin{proof}
Choose some resolution of singularities $Z \rightarrow Y$.
By generic smoothness, over some open subset $T' \subset T$
the family $Z \rightarrow T$ is smooth and 
the fibre $Z_{t}$ is a (nonsingular) birational model of $Y_{t}$.
The assertion follows from the theorem
applied to the restricted family $Z' \rightarrow T'$.
\end{proof}

\begin{rem} \label{extended}  \em
We think that the rigidity theorem holds more generally if
$Y \rightarrow T$ is a  family of projective varieties $Y_t$,
with canonical singularities and of general type. 
\end{rem}

\subsection{Proof of the rigidity theorem}
\begin{stuff} \em
The variation of higher differentials  (cf. \cite{M} \S 4.1). \label{variation}
Let $f: X\times T \dasharrow Y$ be a family of rational maps, as in the statement. 
Let $m = r_{n}$ be the integer defined in theorem \ref{HKT}, and
consider the exact sequence of  sheaves on $T$ 
$$0 \rightarrow q_*(\omega_{Y/T}^{\otimes m}) \rightarrow  V \otimes
{\mathcal O}_T \rightarrow {\mathcal Q} \rightarrow 0
\ \mbox{ where } \  V := H^0(X,\omega_X^{\otimes m})$$ 
in which we know from theorem \ref{Siu} that 
$q_*(\omega_{Y/T}^{\otimes m})$ is locally free. 
Let $\nu$ be the rank of this sheaf.

The claim is that the image of this inside $V \otimes {\mathcal O}_T$
is a trivial subsheaf $W \otimes {\mathcal O}_T$.
It follows that the composite map
$X \times T \dasharrow Y \dasharrow {\mathbb P}(W) \times T$ 
is of the form $f_{0} \times 1$, which gives the theorem.  

In order to prove the claim, we may restrict to a Zariski open subset of $T$. 
{Suppose indeed that for some $T' \subset T$
the isomorphism
$q_*(\omega_{Y/T'}^{\otimes m}) 
\overset{\sim}{\rightarrow} W \otimes {\mathcal O}_{T'}$
is proved. 
Since $q_*(\omega_{Y/T}^{\otimes m})$ is locally free
it follows that there is an induced map
$q_*(\omega_{Y/T}^{\otimes m}) 
\rightarrow W \otimes {\mathcal O}_{T}$,
and this is injective as a map of bundles, of the same rank,
hence it is an isomorphism.}

Therefore we may assume that the cokernel $\mathcal Q$ is locally free.
Then the exact sequence above may be seen
as the pullback of the universal sequence on the suitable Grassmannian through the morphism 
$$T \rightarrow \grass_\nu(V) \mbox{ \ sending } t \mbox{ to }
H^0(Y_t,\omega_{Y_t}^{\otimes m}).$$ 

{\bf Claim:}  this map  is constant. \medskip
\\ 
In order to prove the claim, it is enough to restrict to curves in $T$.
\end{stuff}
\begin{stuff} \em
One parameter families  (cf. \cite{M}  \S 4.2).
Let $C $ be a nonsingular curve, let $Y$ be a nonsingular variety,
and assume that $Y \rightarrow C$ is
a smooth projective family. If 
$C \hookrightarrow D$ is an embedding into a
projective nonsingular curve, 
complete the family to a flat family $Y' \rightarrow D$. 
Some limit fibre is possibly singular.
Then apply {\em semistable reduction}: there is  a projective nonsingular
curve $B$ with a finite map $\varphi : B \rightarrow D$, 
and there is a projective nonsingular variety $Z$
with a morphism $Z \rightarrow B$, and with a morphism of families 
$Z \rightarrow Y'$, such that: 
restricted to $A= \varphi^{-1}(C)$ 
the family coincides with the pullback of the original family over $C$, and moreover the new 
limit fibres  have only normal crossings as singularities. 
$$\begin{array}{ccccc}
Y & \hookrightarrow & Y' & \leftarrow & Z \\  
\downarrow && \downarrow && \downarrow \\ 
C & \hookrightarrow & D & \underset{\varphi}{\leftarrow} & B
\end{array}$$

Let $X\times C \dasharrow Y$ be a family of rational maps.
It is viewed as a family $X\times D \dasharrow Y'$, 
and can be lifted to a family $X\times B \dasharrow Z$.

As the construction of the relative canonical sheaf is compatible with pullback, one shows that
the induced map $A \rightarrow \grass_\nu(V)$,
to the appropriate Grassmannian, factorizes as $A \rightarrow C$
followed by the induced map $C \rightarrow \grass_\nu(V)$. 
So it is enough to prove that 
the map on $A$ is constant
\end{stuff}

\begin{stuff} \em
Proof of the claim  (cf. \cite{M} \S 5).
Thus one is reduced to the
situation where $B$ is a nonsingular projective curve, $Z$ is a nonsingular
projective variety and $Z \rightarrow B$ 
is a family with connected fibres having only normal
crossings. Consider the
exact sequence $0 \rightarrow q_*(\omega_{Z/B}^{\otimes m}) \rightarrow  V
\otimes {\mathcal O}_B \rightarrow {\mathcal Q} \rightarrow 0$.
Let $A \subset B$ be an open subset such that the restriction ${\mathcal Q}_A$ is locally
free. 
Let $T({\mathcal Q}) \subset {\mathcal Q}$ be a torsion subsheaf such that ${\mathcal
Q}/T({\mathcal Q})$ is locally free. Then consider the 
exact sequence 
$$0 \rightarrow {\mathcal K} \rightarrow  V \otimes
{\mathcal O}_B \rightarrow {\mathcal Q}/T({\mathcal Q}) \rightarrow 0$$ 
in which the kernel ${\mathcal K}$ is locally free. 
Note that $q_*(\omega_{Z/B}^{\otimes m}) \subset {\mathcal K}$ 
and the two sheaves coincide on $A$. It follows that there is an induced morphism
\begin{center}
$B \rightarrow \grass_\nu(V)$ \ {sending} $t$ to  ${\mathcal K}_t$
\end{center}
which is
$H^0(Z_t,\omega_{Z_t}^{\otimes m})$ for $t \in A$.

The pullback of the Pl\"ucker line bundle is $\det {\mathcal K}^\vee$. If
this is a non-constant map then
$\deg {\mathcal K}^\vee > 0$ and $\deg {\mathcal K} < 0$. 

On the other hand from the semipositivity theorem \ref{semipositivity}
we have that the sheaf $q_*(\omega_{Z/B}^{\otimes m})$ is nef.
Then $\det q_*(\omega_{Z/B}^{\otimes m})$ is nef and $\deg
q_*(\omega_{Z/B}^{\otimes m}) \geq 0$.
Moreover $q_*(\omega_{Z/B}^{\otimes m}) \subset {\mathcal K}$ and the two
coincide on
$A$, and this implies $\deg {\mathcal K} \geq \deg
q_*(\omega_{Z/B}^{\otimes m}) \geq 0$,
contradiction.
\end{stuff}

\subsection{Generic rigidity and limit maps}

Let $X$ be a projective variety, of dimension $n$,
let $Y \rightarrow T$ be a  family of 
$n$-dimensional projective varieties,
with $T$ a smooth curve, and let
$f: X\times T \dasharrow Y$ be a 
family of  dominant rational maps $f_t: X \dasharrow Y_t$,
defined over all of $T$. 
The graph $F:= \Gamma(f)$
with the natural projection $F \rightarrow T$
represents the family
of graphs $F_{t}=\Gamma(f_{t})$.
Moreover $\deg (f_{t}) =:d$ is  constant in the family.

A more general definition is obtained if we admit that
some special fibre $Y_{t_{0}}$ may have multiplicity,
i.e. may be of the form $d_{0}Y_{0}$, multiple of an irreducible.
In this situation $\deg (f_{t}) =:d$ is  constant for $t \neq t_{0}$.
Since $\pi_{*} \Gamma(f_{t}) = d Y_{t}$ holds for $t \neq t_{0}$,
where $\pi$ denotes projection to $Y$, the same holds for every $t$,
and $\pi_{*} \Gamma(f_{t_{0}}) = dd_{0} Y_{0}$ implies that
$\deg(f_{t_{0}}) = dd_{0}$. So in particular
the limit map has higher degree.

The more general definition above may be expressed equivalently
by requiring that the graph $\Gamma(f)$ defines 
a family of graphs $\Gamma(f_{t})$, without requiring
that $Y$ defines the family of images, i.e. that every
fibre $Y_t$ is reduced.

Now consider a family which is {\em generically trivial},
as in  corollary \ref{weakrigidity},
i.e. which  is obtained as 
$$X \times T \overset{h \times 1}{\dasharrow} V \times T 
\overset{g}{\dasharrow} Y$$
a constant family 
followed by a birational isomorphism $g$,
defined over $T - \{t_{0}\}$.
In this situation we have $f_{t} = g_{t} \circ h$ for $t \neq t_{0}$,
so all these maps are birationally equivalent,
of  degree $d:= \deg(f_{t}) = \deg(h)$.
Concerning the limit map $f_{t_{0}}$
we can say the following.

\begin{prop}  \label{primitive}
Assume that the family $f$ is generically trivial
and of constant degree, i.e. that
$\deg(f_t)=d$ for every $t$.
Then the limit map $f_{t_{0}}$ is 
in the same birational equivalence class
as the general $f_{t}$
\end{prop}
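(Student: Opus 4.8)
The plan is to prove that the limit map $f_{t_0}$ belongs to the birational class of the fixed map $h\colon X\dasharrow V$; since, by the discussion preceding the statement, this is the common class of all the general members $f_t$ with $t\neq t_0$, this is exactly what is required. Because $h$ does not vary with $t$, the whole question is reduced to a single point: I must show that the limit of the birational isomorphisms $g_t\colon V\dasharrow Y_t$ is again a birational isomorphism.

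First I would give a precise meaning to this limit. We may take $V$ nonsingular, so that $V\times T$ is smooth and the dominant rational map $g\colon V\times T\dasharrow Y$ has an indeterminacy locus of codimension at least two. The fibre $V\times\{t_0\}$ is a divisor, hence is not contained in that locus, so the restriction $g_{t_0}\colon V\dasharrow Y_0$ to $Y_0=(Y_{t_0})_{\mathrm{red}}$ is a well-defined rational map. The same codimension-one remark applied to $h\times 1$, whose indeterminacy locus is $\mathrm{Indet}(h)\times T$, shows that restricting to the fibre over $t_0$ commutes with the composition $f=g\circ(h\times 1)$, so that $f_{t_0}=g_{t_0}\circ h$. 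Moreover $f_{t_0}$ is dominant, hence so is $g_{t_0}$, onto the $n$-dimensional variety $Y_0$.

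It then remains only to read off a degree. Passing to function fields, the inclusions $f_{t_0}^{*}K(Y_0)\subseteq h^{*}K(V)\subseteq K(X)$ give the tower formula $\deg f_{t_0}=\deg h\cdot\deg g_{t_0}$. Here $\deg h=d$, and the hypothesis of constant degree says $\deg f_{t_0}=d$; therefore $\deg g_{t_0}=1$. A dominant rational map of degree one between varieties of the same dimension $n$ is birational, so $g_{t_0}$ is a birational isomorphism and $f_{t_0}=g_{t_0}\circ h$ is birationally equivalent to $h$, that is, to the general $f_t$.

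The step I expect to carry the real weight is the second one, the two codimension-one reductions that produce $g_{t_0}$ and the identity $f_{t_0}=g_{t_0}\circ h$: this is precisely where the phenomenon recalled before the proposition is concealed. Indeed the degree $\deg g_{t_0}$ is nothing but the multiplicity $d_0$ of the special fibre in $Y_{t_0}=d_0Y_0$, so that in general $\deg f_{t_0}=d\,d_0$, and the constant-degree hypothesis is exactly the assertion $d_0=1$; once this is in hand the degree computation above is immediate. Care is therefore needed to ensure that the restriction $g_{t_0}$ is taken with respect to $Y_0$ and that no multiplicity is lost in identifying $f_{t_0}$ with $g_{t_0}\circ h$.
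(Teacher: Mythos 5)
Your proof is correct, and it takes a genuinely different route from the paper's. The paper never extends $g$ across the special fibre directly; instead it passes to cycle-space limits: the graphs $G_t=\Gamma(g_t)$ converge to a cycle $G_0$ in the space of cycles of $V\times Y$, the constancy of the intersection number $G_t\cdot(\{x\}\times Y)=1$ shows that the part $G_0'$ of $G_0$ dominating $V$ is an irreducible reduced cycle, hence the graph of a rational map $g_0\colon V\dasharrow Y_0$, and continuity of the inclusion of correspondences $F_t\subset H\circ G_t$ yields $F_{t_0}\subset H\circ G_0'$, i.e.\ $f_{t_0}=g_0\circ h$; the constant-degree hypothesis then forces $g_0$ to be birational, exactly as in your degree count. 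You replace the whole limit-cycle analysis by the elementary fact that a rational map from the smooth variety $V\times T$ is defined in codimension one, so that $g$ restricts to the divisor $V\times\{t_0\}$. Two remarks on that step: (i) it requires properness of $Y\to T$ (valuative criterion), not merely smoothness of the source, though your setting provides it since $Y\to T$ is a projective family; (ii) strictly speaking the proposition's $f_{t_0}$ is defined as the fibre of the graph family $\Gamma(f)$, so one should add the one-line identification of this with the restriction of $f$ to $X\times\{t_0\}$ (the graph of the restriction is contained in the fibre of $\Gamma(f)$, and both are irreducible of dimension $n$) --- a point you gesture at in your final paragraph. As for what each approach buys: yours is more elementary and self-contained, and it makes transparent that $\deg g_{t_0}$ equals the multiplicity $d_0$ of the special fibre; the paper's argument stays within the Chow-variety formalism used throughout the article and produces $g_0$ directly as the limit of the graphs $\Gamma(g_t)$, which matches the cycle-theoretic definition of the limit map with no further identification needed.
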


\begin{proof}
We have 
the graphs $F_{t} = \Gamma(f_{t})$ for every $t$, and 
$G_{t} = \Gamma(g_{t})$ for $t \neq t_{0}$,
and  $H = \Gamma(h)$. 
In terms of composition of correspondences
we have that $F_{t} \subset  H \circ G_{t}$ 
holds for $t \neq t_{0}$
(a little conflict of notations is hidden here).

The family $G_{t}$ converges to a cycle $G_{0}$ in the space of cycles
of $V \times Y$.
The intersection number $G_{t} \cdot (\{x\}\times Y) =1$
is constant in the deformation, and 
$G_{0} \cdot (\{x\}\times Y) =1$ implies that the part of $G_{0}$
that dominates $V$ is an irreducible reduced cycle and is the graph $G'_{0}$ of some
rational map $g_{0}: V \dasharrow Y_{0}$. By continuity
we have that $F_{t_{0}} \subset H \circ G_{0}$,
and more precisely that $F_{t_{0}} \subset H \circ G'_{0}$,
since $F_{0}$ dominates $X$.
But this implies that $f_{t_{0}} = g_{0} \circ h$.
As the degree is constant in the family, then
$g_{0}$ is birational, hence $f_{t_{0}}$ is birationally
equivalent to $h$ and hence to every $f_{t}$. 
\end{proof}

\begin{cor} \label{globalrigidity} 
Let $X$ be a projective variety of general type, of dimension $n$. \linebreak
Let $T$ be a smooth curve, 
and let $Y \rightarrow T$ be a  family of 
$n$-dimensional projective varieties $Y_t$ of
general type. Assume that
$f: X\times T \dasharrow Y$ is a family of  dominant rational maps
$f_t: X \dasharrow Y_t$, of constant degree. Then 
all maps $f_t$ are birationally equivalent.
\end{cor}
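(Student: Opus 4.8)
The plan is to combine the generic rigidity from corollary \ref{weakrigidity} with the analysis of limit maps in proposition \ref{primitive}, so as to upgrade the conclusion from \emph{almost all} fibres to \emph{all} fibres. First I would apply corollary \ref{weakrigidity} to obtain a dense open subset $T' \subset T$ over which all the maps $f_t$ are birationally equivalent; since $T$ is a curve, the complement $T \setminus T' = \{t_1, \ldots, t_k\}$ is a finite set. Over $T'$ the family is generically trivial, so after choosing a birational model $V$ of the generic fibre the restriction of $f$ to $T'$ takes the form $X \times T' \overset{h \times 1}{\dasharrow} V \times T' \overset{g}{\dasharrow} Y$, a constant family $h: X \dasharrow V$ followed by a birational isomorphism $g$ defined over $T'$.

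It then remains to treat each of the finitely many exceptional points. The key device is to isolate one bad point at a time: for a fixed $t_i$ I would restrict the entire situation to the Zariski open set $U_i := T \setminus (\{t_1, \ldots, t_k\} \setminus \{t_i\})$, which contains $t_i$ and satisfies $U_i \setminus \{t_i\} = T'$. On $U_i$ the family $f$ is generically trivial, being trivial precisely over $U_i \setminus \{t_i\} = T'$, and it is of constant degree by hypothesis. These are exactly the hypotheses of proposition \ref{primitive}, applied with $U_i$ in place of $T$ and with $t_i$ as the distinguished limit point, and it yields that $f_{t_i}$ lies in the same birational equivalence class as the general $f_t$.

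Running this argument over $i = 1, \ldots, k$ shows that every exceptional map $f_{t_i}$ is birationally equivalent to the generic one, and together with the triviality over $T'$ this proves that all $f_t$ are birationally equivalent.

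I expect the only genuinely delicate point to be the bookkeeping that makes proposition \ref{primitive} applicable: that proposition is formulated for a family trivial over the complement of a \emph{single} point, whereas corollary \ref{weakrigidity} leaves several exceptional fibres. The passage to $U_i$ resolves this, but one should verify that the generic triviality produced by corollary \ref{weakrigidity}---which in its proof lives on the nonsingular birational models $Z_t$---descends to the stated form for $f$ itself, and that the constant-degree hypothesis is indeed essential: without it, the discussion preceding proposition \ref{primitive} shows that the limit map may acquire a strictly higher degree $d d_0$, so that the fibres over $\{t_1,\ldots,t_k\}$ need no longer be equivalent to the generic map.
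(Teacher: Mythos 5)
Your proof is correct and follows exactly the route the paper intends: the paper's own proof of this corollary is the one-line statement that it is ``immediate from corollary \ref{weakrigidity} and proposition \ref{primitive}'', and your argument is precisely the careful spelling-out of that combination, including the right bookkeeping device of restricting to $U_i = T'\cup\{t_i\}$ so that proposition \ref{primitive} (stated for a single exceptional point) applies at each bad fibre in turn.
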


\begin{proof}
Immediate from corollary \ref{weakrigidity}
and proposition \ref{primitive} above.
\end{proof}

\begin{rem} \em  
There is a property of the limit map which forces the
generically trivial family to be of constant degree.
Let us say that
a dominant rational map of finite degree $X \dasharrow Y$
is {\em  primitive} if 
it admits no factorization  $X \dasharrow Z \dasharrow Y$
in which  both factor maps have degree $>1$.
The same argument as in the proof above implies that,
if the generic degree is $>1$, and if the limit map
$f_{t_{0}}$ is a primitive map, 
then it is 
in the same birational equivalence class
as the general $f_{t}$.
\end{rem}

\section{Boundedness} \label{boundedness}
Second step, that on a given variety the family of rational maps,
 of the type being considered, is a bounded family.
Here we point out that the canonical volume is a natural tool.

\subsection{Bound for  the graph of a map} \label{bdg}
Let $X$ be a nonsingular variety of general type, of dimension $n$.
Choose an embedding $X \hookrightarrow \mathbb P^N$ or more generally
a birational embedding
$$X \dashrightarrow X' \subset \mathbb P^N$$
and let $H$ be the  pullback of a general hyperplane.
Every rational map of finite degree $X \dasharrow Y$
which dominates a variety of general type,
followed by the $r_{n}$-canonical 
birational embedding 
$Y \dasharrow Y' \subset \mathbb P^{M}$, 
may be seen as a rational map
$X' \dasharrow Y' \subset \mathbb P^{M}$,
with bounded embedding dimension $M \leq M_{X}$, 
see \S \ref{plumap}.
This map has an associated  graph 
$$\Gamma \subset X'  \times \mathbb P^M$$
and the degree of the graph is bounded too.

\begin{prop} \label{grafico}
Let $X$ and $H$ be as in the setting above. There is some  positive
integer $\gamma_X$ such
that every associated graph  $\Gamma$ in $\mathbb P^N \times \mathbb P^M$
has bounded degree:
$$\deg \Gamma \leq \gamma_X.$$ 
More precisely we have:
\begin{enumerate}
\item $\deg \Gamma \leq \mbox{\rm vol}_{X}(H + r_{n}K_X)$ \ in general; 
\item $\deg \Gamma  \leq  (H + r_{n}K_X)^n$  \ if $H + r_{n}K_X$  is nef;
\item $\deg \Gamma \leq (2r_{n})^n \,\mbox{\rm
vol}(K_X)$ \  if $H \equiv r_{n}K_X$.
\end{enumerate}
\end{prop}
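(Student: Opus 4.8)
The plan is to realise $\deg\Gamma$ as a moving self-intersection number of a linear system on $X$ itself, and then to dominate it by $\mathrm{vol}_X(H+r_nK_X)$ by means of the observation recorded in \S\ref{vol}. The whole computation is carried out on $X$, exploiting that the graph is birational to $X$.

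\emph{Two linear systems on $X$.} First I would describe the rational map whose image is $\Gamma$ through a pair of linear systems on $X$. The birational embedding $X\dashrightarrow X'\subset\mathbb P^N$ is given by a subsystem $V_1\subseteq H^0(X,\mathcal O_X(H))$ whose hyperplane pullback is $H$. For the second factor, the composite $X\dasharrow Y\dasharrow Y'\subset\mathbb P^M$ is the $r_n$-canonical embedding of $Y$ (theorem \ref{HKT}) followed by $f$; its defining system is $V_2=f^\ast|r_nK_Y|$. Here I would use that pluricanonical forms pull back under a dominant generically finite rational map: resolving $f$ to a morphism $g\colon W\to Y$ on a smooth model $\sigma\colon W\to X$ birational to $X$, the ramification formula $K_W=g^\ast K_Y+R$ with $R\geq 0$ gives an injection $f^\ast\colon H^0(Y,r_nK_Y)\hookrightarrow H^0(W,r_nK_W)=H^0(X,r_nK_X)$, the last equality being the birational invariance of plurigenera. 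Thus $V_2\subseteq H^0(X,\mathcal O_X(r_nK_X))$, with hyperplane pullback class $r_nK_X$.

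\emph{Degree as a moving self-intersection.} Next I would identify the degree. Composing the combined map $X\dasharrow X'\times\mathbb P^M$ with the Segre embedding, the image is $\Gamma$ Segre-embedded, of the same degree, and this map is defined by the product system $\langle V_1\cdot V_2\rangle\subseteq H^0(X,\mathcal O_X(H+r_nK_X))$. Since $\Gamma$ is birational to $X$, the degree of the image equals the moving self-intersection number of this subsystem; being a subsystem of $|H+r_nK_X|$, that number is at most $(H+r_nK_X)^{[n]}$, which in turn is at most $\mathrm{vol}_X(H+r_nK_X)$ because the volume is the limit of the non-decreasing sequence $(m(H+r_nK_X))^{[n]}/m^n$. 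This is exactly the content of \S\ref{vol}, and yields (1). Statements (2) and (3) then follow from standard properties of the volume: for a nef class the volume equals its top self-intersection, giving (2); and if $H\equiv r_nK_X$ then $H+r_nK_X\equiv 2r_nK_X$, so the homogeneity $\mathrm{vol}_X(cD)=c^n\,\mathrm{vol}_X(D)$ gives $(2r_n)^n\,\mathrm{vol}_X(K_X)$, which is (3).

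\emph{Main obstacle.} The delicate point is the second factor: one must pass correctly from the hyperplane class of $Y'\subset\mathbb P^M$ to the class $r_nK_X$ on $X$, so that $V_2$ genuinely lies in $|r_nK_X|$ and not merely in $|r_nK_W|$ on a blow-up. This rests on the functoriality of pluricanonical forms under the rational map $f$, through a resolution and the ramification inequality, together with the birational invariance of plurigenera. Once this identification is secured, the remaining steps are routine manipulations with the volume.
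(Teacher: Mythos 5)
Your proof is correct and follows essentially the same route as the paper: the paper applies the observation of \S\ref{vol} to the birational embedding $X \dasharrow \Gamma \subset \mathbb P^N \times \mathbb P^M$ to get $\deg \Gamma \leq \mathrm{vol}_X(H + f^*r_nK_Y)$ and then uses the ramification inequality $f^*K_Y \leq K_X$ (``volume increases in effective directions''), while you perform the same two steps in the opposite order, first placing the pulled-back pluricanonical system inside $|r_nK_X|$ and then invoking the moving self-intersection bound. The only difference is expository: you unpack the \S\ref{vol} observation and the pluricanonical pullback in detail, where the paper cites them in one line.
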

\begin{proof}
1. Consider the birational embedding
$X \dasharrow \Gamma \subset \mathbb P^N \times \mathbb P^M$ 
and apply \S \ref{vol}. It follows that 
$\deg \Gamma \leq \mbox{vol}_{X}(H + f^*r_{n}K_Y)$,
and this is $\leq \mbox{vol}_{X}(H + r_{n}K_X)$
as 'volume increases in effective directions': 
${\rm vol}_{X}(D) \leq {\rm vol}_{X}(D')$ if $D \leq D'$.
2. This is because ${\rm vol}_X(D) = D^{n}$ if $D$ is big and nef. 
3. is just a special case of point 1, using ${\rm vol}_X(rD) = r^{n}{\rm vol}_X(D)$.
\end{proof}

The proposition implies that every rational map on $X$,
of the type being considered, is birationally equivalent
to some map whose graph is a point in the disjoint union
of Chow varieties
$\bigsqcup\, \chow_{n,\gamma}(X' \times \mathbb P^{M_{X}})$,
taken over all $\gamma \leq \gamma_{X}$,  and
more precisely a point in the Zariski open subset
parametrizing rational maps of finite degree.

\subsection{Remarks on the degree of a map}
The degree of the graph is greater than the degree of the map.
We insert here a few remarks on how to
bound the degree of a map,
which however are not needed in the following.

\begin{prop} \label{grado}
Let $f : X \dasharrow Y$ be a rational map of finite degree.
\begin{enumerate}
 \item If $K_X$ and $K_Y$ are nef, then \
$\deg(f) \; K_Y^n \leq  K_X^n$;
\item if $K_X$ and $K_Y$ are big, then \
$\deg(f) \; \mbox{\rm vol} (K_Y) \leq \mbox{\rm vol} (K_X)$.
\end{enumerate}
\end{prop}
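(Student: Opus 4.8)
The plan is to prove both inequalities by pulling back pluricanonical forms along $f$ and comparing moving self-intersection numbers, exploiting that a rational map of finite degree $d$ multiplies volumes and top self-intersections by exactly $d$ on the locus where it is a covering. First I would reduce to a morphism: choose a smooth birational model $\widetilde X$ and a smooth model of $Y$ so that $f$ becomes a generically finite dominant morphism $\widetilde f : \widetilde X \to \widetilde Y$ of degree $d=\deg(f)$. Since $\mathrm{vol}(K_X)$ and $\mathrm{vol}(K_Y)$, and likewise $K_X^n$ and $K_Y^n$ in the nef case, are birational invariants (the volume is defined asymptotically via $h^0$, hence invariant under birational modification), it suffices to establish the inequalities for $\widetilde f$. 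The essential algebraic input is the ramification formula $K_{\widetilde X} = \widetilde f^{*} K_{\widetilde Y} + R$ with $R \geq 0$ effective, which holds because $\widetilde f$ is generically finite between smooth varieties.

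For part 2, assuming $K_X$ and $K_Y$ nef, I would argue that $\widetilde f^{*}K_Y$ is nef and that the projection formula gives $(\widetilde f^{*}K_Y)^n = d \, K_Y^n$. Then from $K_X = \widetilde f^{*}K_Y + R$ with both summands the pullback being nef and $R$ effective, I would conclude
\begin{equation*}
K_X^n \geq (\widetilde f^{*}K_Y)^n = d\, K_Y^n,
\end{equation*}
where the inequality uses that intersecting the nef class $\widetilde f^{*}K_Y + R$ with itself $n$ times and expanding produces the pure term $(\widetilde f^{*}K_Y)^n$ plus cross terms that are nonnegative because each factor is either nef or effective-against-nef. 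This is exactly the desired $\deg(f)\,K_Y^n \leq K_X^n$.

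For part 1 I would run the parallel argument using the volume in place of the top self-intersection, invoking the asymptotic description $\mathrm{vol}_X(D)=\lim_m (mD)^{[n]}/m^n$ from \S\ref{vol}. The key facts are: volume is monotone in effective directions, $\mathrm{vol}_X(D)\leq \mathrm{vol}_X(D')$ when $D\leq D'$, already used in the proof of Proposition \ref{grafico}; and for a generically finite degree-$d$ morphism one has $\mathrm{vol}_X(\widetilde f^{*}D) = d\,\mathrm{vol}_Y(D)$, which follows because pullback of sections is injective and asymptotically accounts for a factor $d$ via the moving self-intersection numbers. Combining $K_X = \widetilde f^{*}K_Y + R$ with monotonicity and this multiplicativity yields
\begin{equation*}
\mathrm{vol}(K_X) \geq \mathrm{vol}_X(\widetilde f^{*}K_Y) = d\,\mathrm{vol}(K_Y).
\end{equation*}

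The main obstacle I anticipate is making the multiplicativity of the volume under a generically finite morphism fully rigorous, since $\widetilde f$ need not be finite everywhere and the exceptional or positive-dimensional fibres could in principle disturb the count of sections. I would handle this by passing to the open locus where $\widetilde f$ is étale (or at least finite flat of degree $d$), noting that its complement has codimension at least one and so does not affect the asymptotic leading term in $m^n$; alternatively one compares $H^0(\widetilde X, m\,\widetilde f^{*}K_Y)$ with $H^0(\widetilde Y, mK_Y)$ through the trace/norm map to pin down the factor $d$. The nef case of part 2 is cleaner because the projection formula handles the intersection number directly, so the real care is needed only in transferring that clean multiplicativity to the asymptotic volume in part 1.
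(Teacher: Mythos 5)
Your treatment of the big case (part 2 of the statement; note you have swapped the two labels) is essentially the paper's own argument: monotonicity of the volume in effective directions, combined with the factor $\deg(f)$ extracted from moving self-intersection numbers. One remark: you only need the inequality $\mathrm{vol}_X(\widetilde f^{*}D) \geq \deg(f)\,\mathrm{vol}_Y(D)$, which is the easy direction (pull back the moving part of $|mD|$ and apply the projection formula); the exact multiplicativity that you flag as the ``main obstacle'' and propose to prove via \'etale loci or trace maps is not needed, and the paper indeed states only the inequality.

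The nef case, however, contains a genuine gap. Once you resolve the indeterminacy, the ramification formula lives on a blow-up $\mu:\widetilde X\to X$ and reads $K_{\widetilde X}=\widetilde f^{*}K_Y+R$; but $K_{\widetilde X}=\mu^{*}K_X+E$ is \emph{not} nef anymore, and moreover $K_{\widetilde X}^n\neq K_X^n$ in general, so the divisor you expand is not ``the nef class $\widetilde f^{*}K_Y+R$'' and its top self-intersection is not the invariant you must bound. Worse, the expansion argument itself is invalid even granting nefness of the summand: for $A$ nef and $R$ effective, the cross terms $A^{n-k}\cdot R^{k}$ with $k\geq 2$ need not be nonnegative --- on the blow-up of a surface take $A=\mu^{*}H$ and $R=E$ exceptional, so that $R^{2}=-1$. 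Nonnegativity of ``effective against nef'' applies only when $R$ occurs linearly; the telescoping identity $(A+R)^n-A^n=R\cdot\sum_{i}(A+R)^{i}A^{n-1-i}$ would require both $A$ and $A+R$ to be nef, which fails here. This is precisely why the paper argues through cohomology instead of intersection theory: from $\widetilde f^{*}K_Y\leq K_{\widetilde X}$ one gets $h^0(m\widetilde f^{*}K_Y)\leq h^0(mK_{\widetilde X})=h^0(mK_X)$, and asymptotic Riemann--Roch is applied separately to the nef divisor $\widetilde f^{*}K_Y$ on $\widetilde X$, whose leading coefficient is $(\widetilde f^{*}K_Y)^n/n!=\deg(f)\,K_Y^n/n!$ by the projection formula, and to the nef divisor $K_X$ on $X$ itself, whose leading coefficient is $K_X^n/n!$; comparing leading coefficients gives part 1. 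You should replace your intersection-theoretic expansion by this $h^0$ comparison.
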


\begin{proof}
1. Use the asymptotic Riemann-Roch theorem
(cf. \cite{L}, I, 1.4.41): if $D$ is nef, then
$h^0(mD) = \frac{1}{n!} D^n\, m^n + O(m^{n-1})$. Since $f^*K_Y \leq K_X$
consider $h^0(m f^*K_Y)
\leq h^0(mK_X)$ and apply aRR, comparing the leading coefficients. 
2. Using the volume of a big divisor, expressed in terms of
moving self-intersection numbers, it is easy to see that  
under a rational map of finite degree one has
$\deg(f) \, \mbox{\rm vol}_Y (D) \leq \mbox{\rm vol}_X (f^*D)$. Moreover
$f^*K_Y \leq K_X$  implies
that $\mbox{\rm vol}_{X}(f^*K_Y) \leq \mbox{\rm vol} (K_X)$. 
\end{proof}

\begin{prop}
Let $X$ be a variety of general type. There is some positive 
rational number $\epsilon_X >0$ such that:
if $f : X \dashrightarrow Y$ is a rational map of finite degree which dominates $Y$ of general type,
then $\mbox{\rm vol} (K_Y) \geq \epsilon_X$  and therefore
$$\deg(f)  \leq \frac{1}{\epsilon_X} \; \mbox{\rm vol} (K_X).$$
\end{prop}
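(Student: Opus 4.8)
The plan is to reduce everything to a single uniform lower bound for the canonical volume of $n$-dimensional varieties of general type, where $n = \dim X$. Note first that the stated degree inequality is then immediate: since $f$ is dominant of finite degree, $Y$ has dimension $n$, and Proposition \ref{grado}(2) gives $\deg(f)\,\mbox{\rm vol}(K_Y) \leq \mbox{\rm vol}(K_X)$; once we know $\mbox{\rm vol}(K_Y) \geq \epsilon_X$ we divide to obtain $\deg(f) \leq \mbox{\rm vol}(K_X)/\epsilon_X$. So the whole content lies in producing the constant $\epsilon_X$, and I would look for one depending only on $n$.

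The key step is to invoke the uniform pluricanonical embedding theorem \ref{HKT}. For the given $Y$, which is $n$-dimensional and of general type, the divisor $r_n K_Y$ defines a birational embedding $Y \dashrightarrow Y' \subset \mathbb{P}^M$, where crucially $r_n$ depends only on $n$. Taking $D = r_n K_Y$ to be the pullback of a general hyperplane, the observation of \S \ref{vol} yields $\deg Y' \leq \mbox{\rm vol}_Y(r_n K_Y)$, exactly as recorded in the remark following theorem \ref{HKT}.

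To finish, I would use the elementary fact that any $n$-dimensional irreducible subvariety of projective space has degree at least $1$, so $\deg Y' \geq 1$. Combining this with $\mbox{\rm vol}_Y(r_n K_Y) = r_n^n\,\mbox{\rm vol}(K_Y)$, one obtains
$$1 \leq \deg Y' \leq r_n^n\,\mbox{\rm vol}(K_Y),$$
hence $\mbox{\rm vol}(K_Y) \geq 1/r_n^n$. Thus $\epsilon_X := 1/r_n^n$ works: it is a positive rational number depending only on $n = \dim X$, and the degree bound follows as above.

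The point to stress is that the genuine difficulty of the statement is entirely absorbed into theorem \ref{HKT}; it is precisely the \emph{uniformity} of $r_n$ across all $n$-dimensional varieties of general type that makes the lower bound $\epsilon_X$ uniform. Without that uniform pluricanonical embedding there would be no reason for $\mbox{\rm vol}(K_Y)$ to stay bounded away from zero as $Y$ ranges over the (infinitely many) possible targets dominated by $X$. Everything else — the comparison $\deg Y' \geq 1$ and the passage through Proposition \ref{grado}(2) — is routine, so I do not expect any real obstacle beyond correctly quoting the deep input.
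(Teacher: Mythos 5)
Your proposal is correct, but it takes a different route from the paper at the key step. The paper's own proof is a one-line citation: it quotes the corollary of Hacon--McKernan (\cite{HK}) asserting that $\mbox{\rm vol}(K_Y) \geq \epsilon$ uniformly over all $n$-dimensional varieties $Y$ of general type, and then applies Proposition \ref{grado}(2), exactly as you do for the final division. You instead \emph{reconstruct} that uniform lower bound inside the paper's own toolkit: Theorem \ref{HKT} gives the birational embedding $Y \dashrightarrow Y' \subset \mathbb{P}^M$ by $r_nK_Y$, the inequality of \S \ref{vol} (recorded in the remark after Theorem \ref{HKT}) gives $\deg Y' \leq \mbox{\rm vol}(r_nK_Y)$, and $\deg Y' \geq 1$ then forces $\mbox{\rm vol}(K_Y) \geq 1/r_n^n$. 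One small imprecision: the pullback $D$ of a general hyperplane under the $r_n$-canonical map is not literally $r_nK_Y$ but only a member of the moving part of $|r_nK_Y|$, so the correct chain is $\deg Y' \leq \mbox{\rm vol}_Y(D) \leq \mbox{\rm vol}_Y(r_nK_Y) = r_n^n\,\mbox{\rm vol}(K_Y)$, using monotonicity of volume in effective directions (the same point the paper glosses over in its remark, so this is harmless). What your approach buys is worth noting: it yields the \emph{explicit} constant $\epsilon_X = 1/r_n^n$ rather than an abstract $\epsilon$, which fits the paper's stated goal of effective estimates --- the bound $\deg(f) \leq r_n^n\,\mbox{\rm vol}(K_X)$ is then effective modulo knowledge of $r_n$, precisely the same bottleneck the paper identifies elsewhere. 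Both arguments ultimately rest on the same deep input, since the Hacon--McKernan volume corollary is itself deduced from their uniform birationality theorem by essentially your argument.
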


\begin{proof}
It has been shown by Hacon and McKernan (\cite{HK} cor.) that  for every $Y$ of
general type of given dimension  
one has  $\mbox{\rm vol} (K_Y)
\geq \epsilon$ for some  $\epsilon >0$. 
Therefore $\epsilon_X$ exists. Then use point 2
in the proposition above
\end{proof}

\section{Finiteness}
The finiteness theorem is the following

\begin{thm} \label{finiteness}
Let $X$ be a projective variety of general type, of dimension $n$. 
The set of rational maps $X \dasharrow Y$ 
which dominate $n$-dimensional projective varieties $Y$ of general type, 
up to birational isomorphisms $Y \dasharrow Y'$, is a finite set.
\end{thm}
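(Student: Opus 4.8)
The plan is to combine the boundedness results of \S\ref{boundedness} with the rigidity theorem, by organising the rational maps as points of a bunch of Chow varieties and showing that each irreducible component of the resulting parameter space accounts for only one birational equivalence class. First I would invoke Proposition \ref{grafico} together with the pluricanonical embedding of Theorem \ref{HKT}: replacing each target $Y$ by its $r_n$-canonical model $Y' \subset \mathbb P^{M_X}$ and $X$ by a fixed birational model $X' \subset \mathbb P^N$, every map of the type considered acquires a graph $\Gamma \subset X' \times \mathbb P^{M_X}$ of degree at most $\gamma_X$. Thus every birational class is represented by a point of the locally closed subset $U \subset \bigsqcup_{\gamma \le \gamma_X} \chow_{n,\gamma}(X' \times \mathbb P^{M_X})$ parametrising graphs of dominant rational maps of finite degree onto varieties of general type. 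Since $U$ is quasi-projective it has finitely many irreducible components, so the problem reduces to bounding, on each component, the number of classes that occur.

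Next I would fix an irreducible component $W$ of $U$ and restrict the tautological family of graphs to $W$. Extracting images yields a family $Y \rightarrow W$ of $n$-dimensional varieties together with a family $f: X \times W \dasharrow Y$ of dominant rational maps; by the openness of the general type condition (Theorem \ref{generaltype}) the general fibre is again of general type, and the degree $\deg(f_t)$ is constant for general $t$. Connecting two general points of $W$ by an irreducible curve, normalising, and invoking Corollary \ref{weakrigidity} (or Corollary \ref{globalrigidity} in the constant-degree case), I would conclude that the maps attached to the general points of $W$ all lie in a single birational equivalence class. As there are finitely many components, only finitely many classes arise in this way from general points, and every class is represented by \emph{some} point of $U$, hence lies in some component.

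The hard part will be the special points of a component, where the degree of the map may jump while the degree of the graph, and so the Chow point itself, stays fixed; this is exactly the phenomenon analysed in the discussion of limit maps preceding Proposition \ref{primitive}. To absorb such a point into the generic class I would approach it along a curve in $W$ issuing from the general point: the family so obtained is generically trivial, and Proposition \ref{primitive} shows that when the degree is constant the limit map lies in the generic class, while the primitivity criterion stated just after that proposition covers the case of a primitive limit. The genuine obstacle is a class whose representative occurs \emph{only} as a special, non-primitive, higher-degree limit in every component through it; controlling these degenerate limits is the delicate point, and is presumably where the more precise count of Theorem \ref{finite number} is needed.

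A final point requiring care is that the $r_n$-canonical embedding is canonical only up to the action of $\mathrm{PGL}(M_X+1)$, so that each birational class determines a $\mathrm{PGL}$-orbit of Chow points rather than a single point. This is harmless for the finiteness statement: post-composition of $X' \dasharrow Y' \subset \mathbb P^{M_X}$ with a projective transformation produces a birationally equivalent map, so a whole orbit represents one class and, being a trivial and hence generically rigid family, is swallowed by the same rigidity argument already applied to $W$. Putting these observations together bounds the total number of birational classes by a quantity depending only on the irreducible components of $U$, which is finite, and this proves Theorem \ref{finiteness}.
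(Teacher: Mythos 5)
You have correctly identified the gap in your own argument, and it is a genuine one: classes represented \emph{only} as special, non-primitive, higher-degree limits are not handled, and they cannot be handled by refining the limit analysis of Proposition \ref{primitive} on your coarse parameter space $U$, because on $U$ the degree of the map genuinely can jump at exactly the points you need to control. What your proposal is missing is the paper's key device, which is a change of parameter space rather than a sharper study of limits: the paper stratifies not only by the graph degree $\gamma$ but by the degree data of the map, introducing $G_{\gamma,k}$ (the condition $\deg_2\Gamma = k$, a union of irreducible components of $G_\gamma$) and, inside it, the algebraic subset $G_{\gamma,k,d}$ cut out by the \emph{closed} condition that $p_*\Gamma$ is of the form $d\cdot(\mbox{cycle})$, equivalently that the Chow form of $p_*\Gamma$ lies on the $d$-uple Veronese variety. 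The count in Theorem \ref{finite number} runs over the components of $\bigsqcup_{\gamma,k,d} G_{\gamma,k,d}$, and rigidity is applied to two points of $S_{\gamma,k,d}$ (maps of degree $d$ with $m$-canonically embedded image of general type) lying in one component of $G_{\gamma,k,d}$.

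This stratification kills the degree jumps for an arithmetic reason. Let $P \in S_{\gamma,k,d}$ lie in a component $W$ of $G_{\gamma,k,d}$, and let $\delta$ be the degree of the map at the generic point of $W$. Since $W \subset G_{\gamma,k,d}$, we have $d \mid \delta$; since $P$ is a limit of generic points of $W$, and a limit map has degree equal to the generic degree times the multiplicity of the limit image cycle (the computation preceding Proposition \ref{primitive}), we have $\delta \mid d$. Hence $\delta = d$ and the multiplicity is $1$: inside its \emph{own} degree stratum, a representative can never occur as a degenerate higher-degree limit. Consequently, along a curve in $W$ joining two points of $S_{\gamma,k,d}$, after deleting finitely many points (never the two marked ones) one obtains a family with reduced, irreducible fibres of general type, by Theorem \ref{generaltype}, and of constant degree $d$; then Corollary \ref{globalrigidity} applies to \emph{all} points of the curve, which is exactly the statement your argument could secure only at general points of a component. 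Note also that the paper's correspondence is used in the direction opposite to yours: it does not try to show that each component carries exactly one class (a class is allowed to meet several components); it shows that one component cannot meet two distinct classes, and this injectivity is all that the counting requires. Your closing remark on the $\mathrm{PGL}(M_X+1)$-ambiguity is correct but, for the same reason, harmless in the paper's scheme.
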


We shortly discuss the original argument of \cite{M} and 
we propose a variation
which possibly leads to some effective bound for
the finite number of maps.

\subsection{The original argument}
The parameter space for rational maps of finite degree
$X \dasharrow Y' \subset \mathbb P^{M}$,
with fixed $M$, seen at the end of \S \ref{bdg},
is quasi-projective (and highly reducible).
Call it $T$ for a while.
There is an algebraic subset $Y \subset \mathbb P^{M} \times T$
whose reduced fibres in the projection over $T$ are
the dominated varieties $Y'$,
and there is a total rational map $X \times T \dasharrow Y$.

Then use {smoothening stratification:}
there is a stratification into (locally closed) 
smooth strata $T_{\alpha}$
and for each restricted family $Y_{\alpha} \rightarrow T_{\alpha}$
there is a resolution of singularities 
$Z_{\alpha} \rightarrow Y_{\alpha}$ such that
the composite map
$Z_{\alpha} \rightarrow T_{\alpha}$
is a smooth family.
So for every $t \in T$ also there is a 
resolution of singularities $Z_{t} \rightarrow Y_{t}$.
Moreover the stratification may be refined so that
the subset of $T$ parametrizing varieties
(admitting a smooth model) of general type
is a union of a subcollection of strata $T_{\beta}$,
thus a constructible subset.
cf. \cite{M}, \S 3.

It follows that:
{\em the number of equivalence classes of rational maps
in theorem {\em \ref{finiteness}} is 
smaller than the number of connected components of strata.} 
If two maps are in the same connected stratum,
there is a curve (possibly reducible)
connecting the two points and contained in the stratum. This gives
a family of rational maps $X \dasharrow Z_{t}$ such that
$Z_{t}$ is smooth of general type. 
Because of the rigidity theorem \ref{rigidity} they are all
birationally equivalent.

Unfortunately such a stratification seems to be rather intractable.
A variation of the argument is used by  Heier \cite{H}
for morphisms $X \rightarrow Y$ onto varieties with ample  $K_{Y}$. 
In this situation,
the observation is that
{the number of equivalence classes of these morphisms is smaller than
the number of connected components of the union of Chow varieties}.
We show in the next sections that the idea
is applicable in a more general situation.

\subsection{Parametrization}

Let $X$ be a nonsingular projective variety of general type,
of dimension $n$, and choose  a birational embedding
$X \dasharrow X' \subset \mathbb P^{N}$, as in \S \ref{bdg}. 

In the Chow variety
$\chow_{n,\gamma}(X' \times \mathbb P^{M})$  consider
the Zariski open subset
$$G_{\gamma}(X' \times \mathbb P^{M})$$
parametrizing graphs of rational maps $g: X' \dasharrow \mathbb P^{M}$
such that the (closed) image $Y' \subset \mathbb P^{M}$
still is of dimension $n$. For such a map we have
$$\deg_{2}\Gamma = \deg(g) \deg Y'  \mbox{ \ \ and \ \ }
p_{*}(\Gamma) = \deg(g) \; Y'.$$ 
For any $k>0$ with $\gamma \geq k$ define
$$G_{\gamma,k}(X' \times \mathbb P^{M})$$
by requiring that $\deg_{2}\Gamma = k$.
For any $d>0$ with $d|k$ define
$$G_{\gamma,k,d}(X' \times \mathbb P^{M})$$
by the condition that 
$p_{*} \Gamma$  { is of the form } $dY'$ (without
requiring that $Y'$ be irreducible).

\begin{prop} We have that:
\begin{enumerate}
\item
$G_{\gamma,k}(X' \times \mathbb P^{M})$
is a union of irreducible components of 
$G_{\gamma}(X' \times \mathbb P^{M})$,
\item
$G_{\gamma,k,d}(X' \times \mathbb P^{M})$
is an algebraic subset of $G_{\gamma,k}(X' \times \mathbb P^{M})$.
\end{enumerate}
\end{prop}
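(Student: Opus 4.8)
The plan is to read both statements off a single structural input: the \emph{pushforward of cycles} along the second projection $p : X' \times \mathbb P^M \to \mathbb P^M$. Because $p$ is proper and, on $G_\gamma$, the image $Y'$ keeps the expected dimension $n$, sending $\Gamma$ to $p_*\Gamma$ defines a morphism of Chow varieties
$$p_* : G_\gamma(X' \times \mathbb P^M) \longrightarrow \bigsqcup_{k'} \chow_{n,k'}(\mathbb P^M).$$
I would set up this functoriality first (standard for Chow varieties under proper maps), and then obtain the two parts as pullbacks of transparent loci in the target.

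For part (1) I would invoke that the \emph{multidegree} of an $n$-cycle in $\mathbb P^N \times \mathbb P^M$ is locally constant on the Chow variety. Writing $\ell$ and $h$ for the hyperplane classes of the two factors pulled back to the product, the integers $d_{a,b} = [\Gamma]\cdot \ell^{a}h^{b}$ (with $a+b=n$) refine the total degree via $\gamma = \sum_{a+b=n}\binom{n}{a} d_{a,b}$, and each $d_{a,b}$ is an intersection number against the nef classes $\ell,h$, hence a numerical invariant of the class $[\Gamma]\in A_n(\mathbb P^N \times \mathbb P^M)$. As this class is constant along any connected algebraic family of cycles, every $d_{a,b}$ is constant on each irreducible component of $\chow_{n,\gamma}$. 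Since $\deg_2\Gamma$ is exactly the multidegree $d_{0,n}=[\Gamma]\cdot h^{n}$ (the number of points of $\Gamma$ over $n$ general hyperplanes of $\mathbb P^M$, that is $\deg(g)\deg Y'$), the condition $\deg_2\Gamma=k$ is open and closed on $\chow_{n,\gamma}$; intersecting with the open set $G_\gamma$ shows that $G_{\gamma,k}$ is a union of irreducible components of $G_\gamma$.

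For part (2) I restrict $p_*$ to $G_{\gamma,k}$, where the pushforward has fixed total degree $k$, giving $p_* : G_{\gamma,k} \to \chow_{n,k}(\mathbb P^M)$. On this open set $\Gamma$ is a genuine graph, so $p_*\Gamma = m\,[Y']$ with $Y'$ irreducible and $m=\deg(g)\geq 1$, and the defining condition of $G_{\gamma,k,d}$ is precisely $m=d$. I would cut out $\{m=d\}$ by divisibility: the multiplication map $\mu_e : \chow_{n,k/e}(\mathbb P^M)\to \chow_{n,k}(\mathbb P^M)$, $W\mapsto eW$, is a morphism with closed image (Chow varieties being projective), so $p_*^{-1}(\mu_e(\chow_{n,k/e}))$ is the closed locus $\{e\mid m\}$. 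As $m\deg Y'=k$ forces $m\mid k$, the set $\{m=d\}$ is obtained from the closed locus $\{d\mid m\}$ by deleting the finitely many closed loci $\{e\mid m\}$ attached to the proper multiples $e$ of $d$ that divide $k$; being a difference of closed sets it is locally closed, hence an algebraic subset of $G_{\gamma,k}$.

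The genuinely delicate point I expect is the functoriality used throughout: that proper pushforward of cycles descends to an honest morphism of Chow varieties over the entire parameter space, which is exactly why one must work on $G_\gamma$, where the image retains dimension $n$. A secondary subtlety, which dictates the shape of the argument, is the gap between \emph{divisibility by $d$} and \emph{exact multiplicity $d$}: on $\chow_{n,k/d}(\mathbb P^M)$ the reduced cycles form neither an open nor a closed locus (a reduced $Y'$ can degenerate to a multiple one, and conversely), so I would not try to impose reducedness directly on the target. Instead I handle it only after restricting to $G_{\gamma,k}$, where irreducibility of the image turns ``$d$ times a reduced cycle'' into the clean numerical condition $m=d$; there $m$ can only rise to a multiple in limits (the image degree $k/m$ is preserved under specialization inside a fixed Chow variety), which is precisely what makes $\{m=d\}$ open in $\{d\mid m\}$ and validates the divisibility stratification above.
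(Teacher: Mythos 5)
Your part (1) is correct and is essentially the paper's own proof: $\deg_{2}\Gamma$ is an intersection number, hence constant in any connected algebraic family of cycles, so the condition $\deg_{2}\Gamma=k$ singles out a union of irreducible components of $G_{\gamma}(X'\times\mathbb P^{M})$. The problem is in part (2), where you have silently replaced the paper's definition of $G_{\gamma,k,d}$ by a different one. The paper requires that $p_{*}\Gamma$ be of the form $dY'$ \emph{without requiring $Y'$ to be irreducible}; since on $G_{\gamma,k}$ one has $p_{*}\Gamma=m[Y'_{0}]$ with $Y'_{0}$ irreducible, the paper's condition is the divisibility $d\mid m$, not the exact equality $m=d$ that you take as the defining condition. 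That parenthetical is not an oversight: it is precisely what makes $G_{\gamma,k,d}$ closed in $G_{\gamma,k}$. Indeed your own argument proves exactly this statement: the locus $\{d\mid m\}$ is $p_{*}^{-1}\bigl(\mu_{d}(\chow_{n,k/d}(\mathbb P^{M}))\bigr)$, the pullback of the image of the power map $W\mapsto dW$, which on Chow forms is $F\mapsto F^{d}$; this is the paper's ``inverse image of a $d$-ple Veronese variety'', and it is closed. Up to that point your route and the paper's coincide.

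The genuine gap is your final step: subtracting the loci $\{e\mid m\}$ for proper multiples $e$ of $d$ in order to force $m=d$, and then concluding ``locally closed, hence an algebraic subset''. That inference is invalid: an algebraic subset of $G_{\gamma,k}$ means a subset closed in $G_{\gamma,k}$, cut out by equations, and this is what the paper needs downstream --- in proposition \ref{effective} the number of irreducible components of $G_{\gamma,k,d}$ is bounded by a B\'ezout-type estimate applied to its defining equations, and in theorem \ref{finite number} those components are what is counted. Moreover the exact-multiplicity locus genuinely fails to be closed, for exactly the jumping reason you describe in your last paragraph: for graphs of bidegree $(1,2)$ in $\mathbb P^{1}\times\mathbb P^{2}$, the family $g_{t}(x:y)=(x^{2}:y^{2}:txy)$ consists of birational maps onto conics for $t\neq 0$ and degenerates, inside the graph locus, to the $2\!:\!1$ map onto a line at $t=0$, so $\{m=1\}$ is not closed there; precomposing with a fixed double cover gives the same phenomenon with $d>1$ and a source of general type. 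Thus the set you end up with is not the paper's $G_{\gamma,k,d}$, and for that set the asserted conclusion would be false as stated. The repair is simply to read the definition as written and stop at the divisibility locus: your pullback of the image of $\mu_{d}$ already proves the proposition, by the same mechanism as the paper's proof.
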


\begin{proof}
In a family of graphs $\Gamma_{t}$, parametrized by an
irreducible component of $G_{\gamma}(X' \times \mathbb P^{M})$,
the intersection number $\deg_{2}(\Gamma_{t})$ has a constant value $k>0$.
Thus $G_{\gamma,k}(X' \times \mathbb P^{M})$ is a union of
irreducible components of $G_{\gamma}(X' \times \mathbb P^{M})$.
This proves point (1).
The map $G_{\gamma,k}(X' \times \mathbb P^{M}) \longrightarrow 
\chow_{n,k}(\mathbb P^{M})$ such that $t \mapsto p_{*}(\Gamma_{t})$
is an algebraic correspondence. Hence the subset
defined by the condition that $p_{*}(\Gamma_{t})$ is of the form $dY'$,
i.e. that the associated Chow form is of the form $F^{d}$,
is the inverse image of a $d$-ple Veronese variety. This proves point (2).
\end{proof}

Let $m = r_{n}$ be the integer defined in theorem \ref{HKT}.
Define in $G_{\gamma}(X' \times \mathbb P^{M})$ the subset
$$S_{\gamma}(X' \times \mathbb P^{M}),$$
parametrizing maps 
such that the image $Y'$ is of general type
and $m$-canonically embedded,
and similarly define 
$$S_{\gamma,k,d}(X' \times \mathbb P^{M}).$$
Note that the degrees $\gamma,k,d$ are birationally invariant.

\subsection{Refined finiteness theorem}

We point out a more precise version of the finiteness theorem,
in which the finite number of maps is related to
the finite number of irreducible components
of a certain bunch of (subvarieties of) Chow varieties,
over which we have some control.

\begin{thm} \label{finite number}
Let $X$ be a projective variety of general type, of dimension $n$. 
The number of  rational maps on $X$
which dominate $n$-dimensional projective 
varieties of general type, 
up to birational equivalence,
is bounded by the number of 
{irreducible} components
of the disjoint union 
$$\bigsqcup_{} \;
G_{\gamma,k,d}(X' \times \mathbb P^{M_{X}})$$
taken over all $\gamma,k,d$ such that
$1< k \leq \gamma \leq \gamma_{X}$
 and $d>1$ is a divisor of $k$.
\end{thm}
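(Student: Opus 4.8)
The plan is to define an injection from the set of birational equivalence classes of the maps in question (of degree $>1$) into the set of irreducible components of $\bigsqcup G_{\gamma,k,d}(X'\times\mathbb P^{M_X})$; since the degree-one maps form a single class (any degree-one dominant map onto a general-type variety is birational, so its target is a general-type model of $X$), the count of components then yields the asserted bound.

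First I would record how a map determines a component. By the boundedness result, Proposition \ref{grafico}, together with the bound $M\le M_X$ of \S\ref{plumap}, every dominant rational map $f:X\dasharrow Y$ of finite degree onto a variety of general type is birationally equivalent to the map obtained by composing with the $r_n$-canonical embedding $Y\dasharrow Y'\subset\mathbb P^{M_X}$, whose graph is a point of $S_{\gamma,k,d}(X'\times\mathbb P^{M_X})$ with $\gamma=\deg\Gamma\le\gamma_X$, $k=\deg_2\Gamma=\deg(f)\deg Y'$ and $d=\deg(f)$. Since $Y'$ is $r_n$-canonically embedded and of general type it is not a linear subspace, whence $\deg Y'\ge 2$ and $k>1$, while $k\le\gamma$ always and we assume $d>1$. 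These integers are birational invariants, and the $r_n$-canonical model of $Y$ is unique up to the action of the connected group of projective transformations of $\mathbb P^{M_X}$, so the graph of a given class is determined up to a connected (hence irreducible) orbit and therefore lies in a single irreducible component of $G_{\gamma,k,d}$. This is the component I attach to the class.

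The crux is injectivity: I must show that two general-type points of one irreducible component $\Sigma$ of $G_{\gamma,k,d}$ represent birationally equivalent maps. Here the invariance of the general type, Theorem \ref{generaltype}, enters, as it makes the general-type locus $S_{\gamma,k,d}$ open in $G_{\gamma,k,d}$. Given $[f_0],[f_1]\in S_{\gamma,k,d}\cap\Sigma$, I would join them by an irreducible curve $C\subset\Sigma$ and normalize to a smooth curve $\tilde C$. Restricting the universal family of graphs to $\tilde C$ produces a family of dominant rational maps $X\times\tilde C\dasharrow\mathcal Y$ of constant degree $d$ — this constancy is exactly what stratifying by $d$ secures, since on $G_{\gamma,k,d}$ one has $p_*\Gamma=d\,Y'$ identically. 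On the open subset $U\subset\tilde C$ lying over $S_{\gamma,k,d}$ — nonempty because it contains the preimages of $[f_0]$ and $[f_1]$, and irreducible, hence connected, because $U$ is open in the irreducible curve $\tilde C$ — every fibre is of general type. Corollary \ref{globalrigidity} then applies to this constant-degree family of general-type maps over the smooth connected curve $U$ and shows that all the $f_t$, $t\in U$, are birationally equivalent; in particular $f_0$ and $f_1$ are. This proves the claim, so the attaching map of the previous paragraph is injective and the bound follows.

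I expect the principal difficulty to be the control of the special, limit fibres. A curve joining two points of $\Sigma$ will in general meet the non–general-type locus, and, as observed before Proposition \ref{primitive}, the limit of a family of maps can jump to a higher degree, so that rigidity over all of $\tilde C$ cannot be invoked directly. Two features dispose of this. First, passing to the open subvariety $U$ over the general-type locus keeps $f_0$ and $f_1$ as interior general-type members rather than genuine limits, while connectedness of $U$ is guaranteed by irreducibility of $\tilde C$. Second, fixing $d$ on $G_{\gamma,k,d}$ enforces the constant-degree hypothesis of Corollary \ref{globalrigidity}, whose proof already absorbs the limit-map analysis of Proposition \ref{primitive}. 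A remaining routine verification is that a curve in the Chow variety really does provide a family of rational maps in the sense demanded by the rigidity theorem; this is standard once one works over the normalization and the open locus $U$.
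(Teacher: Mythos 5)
Your proof is correct and follows essentially the same route as the paper: the crux in both is that two points of $S_{\gamma,k,d}$ lying in one irreducible component of $G_{\gamma,k,d}$ can be joined by an irreducible curve, over which Theorem \ref{generaltype} makes the general-type locus open, the stratification by $d$ forces constant degree, and Corollary \ref{globalrigidity} then gives birational equivalence. The only cosmetic difference is that you package the count as a single-valued injection (using the connected orbit of pluricanonical models to pin down one component per class, and treating degree-one maps separately), whereas the paper uses a possibly multivalued correspondence between classes and components and only needs that no component meets two distinct classes.
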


\begin{proof} 
For every $M$ and $\gamma,k,d$,
there is a correspondence
$$S_{\gamma,k,d}(X' \times \mathbb P^{M})/_{\sim } \ \ 
\raisebox{5pt}{$\swarrow \raisebox{8pt}{...} \searrow$}  \ \
{i.c.}\; G_{\gamma,k,d}(X' \times \mathbb P^{M})$$
between the set of birational equivalence classes of maps and the set of 
irreducible components of the algebraic set,
in which a class of maps corresponds to an irreducible component
if and only if the two meet at some point.
Each equivalence class corresponds to one or several components,
and the key point is that: two different classes of  maps cannot
correspond to one and the same irreducible component of the algebraic set.
For $M = M_{X}$, this gives the theorem.

Assume that two points in $S_{\gamma,k,d}(X' \times \mathbb P^{M})$ 
are in the same irreducible component
of $G_{\gamma,k,d}(X' \times \mathbb P^{M})$. Then they
are connected by some irreducible curve contained
in the irreducible component.
This gives a family of rational maps 
$X \dasharrow Y'_{t} \subset \mathbb P^{M}$ of finite degree $d>1$.
After removing finitely many points if necessary
(not the two given points), we may assume
that every $Y'_{t}$ is of general type,
by theorem \ref{generaltype}.
Since the degree of maps is constant
in the deformation, then
the two given maps are birationally equivalent, 
by corollary \ref{globalrigidity}.
\end{proof}

\subsection{Towards an effective estimate}
The theorem above also leads to an almost effective result.

\begin{prop} \label{effective}
In theorem {\rm \ref{finiteness}}
the number of equivalence classes of rational maps 
has an upper bound of the form
$B(n, v_{X})$
only depending on the dimension $n$
and the canonical volume $v_{X} = {\rm vol}(K_{X})$.
Here the function $B$ can be explicitely computed
in terms of the function $r_{n}$ that is
defined in theorem $\ref{HKT}$.
\end{prop}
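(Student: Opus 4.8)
The plan is to make the bound in Theorem \ref{finite number} explicit by tracking each quantity through the argument and expressing it in terms of $n$ and $v_X = \mathrm{vol}(K_X)$, using the function $r_n$ from Theorem \ref{HKT} as the only non-elementary input. First I would fix a birational embedding $X \dashrightarrow X' \subset \mathbb P^N$ suited to the problem: the natural choice is to take $H \equiv r_n K_X$, i.e. to use the $r_n$-canonical birational embedding of $X$ itself, so that by point (3) of Proposition \ref{grafico} the graph degree bound becomes $\gamma_X \leq (2r_n)^n\, \mathrm{vol}(K_X) = (2r_n)^n v_X$, a quantity depending only on $n$ and $v_X$. Simultaneously the embedding dimension of the target is controlled: by the Remark following Theorem \ref{HKT} every dominated $Y$ of general type is $r_n$-canonically embedded in $\mathbb P^M$ with $M \leq M_Y = d_Y + n - 1$, and since $\mathrm{vol}(r_n K_Y) = r_n^n \mathrm{vol}(K_Y) \leq r_n^n v_X$ by Proposition \ref{grado}(2), we get a uniform bound $M_X$ on the ambient dimension, again a function of $n$ and $v_X$ alone.

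With $N$, $M_X$, and $\gamma_X$ all bounded in terms of $n$ and $v_X$, Theorem \ref{finite number} reduces the counting problem to bounding the number of irreducible components of the finite disjoint union
\[
\bigsqcup\, G_{\gamma,k,d}(X' \times \mathbb P^{M_X}),
\]
taken over the finitely many triples with $1 < k \leq \gamma \leq \gamma_X$ and $d \mid k$, $d > 1$. The number of such triples is itself elementarily bounded by a function of $\gamma_X$, hence of $n$ and $v_X$. So the remaining task is to bound the number of irreducible components of a locally closed subset of a fixed Chow variety $\mathrm{Chow}_{n,\gamma}(X' \times \mathbb P^{M_X})$. This is where the computations of complexity announced for \S\ref{chowcomplexity} enter: I would invoke a bound on the number of irreducible components of a Chow variety (and of a constructible subset thereof) in terms of the dimension $n$, the degree $\gamma$, and the ambient projective dimensions $N$ and $M_X$. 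Such a bound exists because the Chow variety embeds into a projective space via its Chow forms, with degree and dimension controlled by $n,\gamma,N,M_X$, and the number of components of a projective (or constructible) set is bounded by its degree, which in turn is estimated through the defining equations.

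Assembling these pieces, the composite function
\[
B(n,v_X) = \sum_{\gamma,k,d} \#\{\text{components of } G_{\gamma,k,d}\}
\]
is a finite sum of terms each bounded by an explicit expression in $n$, $\gamma_X \leq (2r_n)^n v_X$, and $M_X$, all of which are explicit in $n$, $v_X$, and $r_n$. The main obstacle I anticipate is the component-counting step for the Chow variety: one needs an effective bound on the number of irreducible components of $\mathrm{Chow}_{n,\gamma}(X' \times \mathbb P^{M_X})$, and more delicately of the constructible subsets $G_{\gamma,k,d}$ cut out by the degree and $d$-th-power (Veronese) conditions. The cleanest route is to bound the degree of the Chow variety in its natural projective embedding and apply a general estimate of the form ``the number of components of a variety is at most its degree,'' but verifying that the locally closed conditions defining $G_{\gamma,k,d}$ do not inflate the component count beyond an explicit bound — and doing so without appealing to the full, intractable smoothening stratification of the original argument — is the delicate part. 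This is precisely why the statement is phrased as ``almost effective'': the function $B$ is explicit granted $r_n$, but $r_n$ itself is not known effectively.
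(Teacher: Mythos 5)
Your proposal is correct and follows essentially the same route as the paper: apply Theorem \ref{finite number}, bound $\gamma_{X}$, $M_{X}$ and $\deg X'$ in terms of $n$, $v_{X}$ and $r_{n}$ (your explicit choices via Proposition \ref{grafico}(3) and Proposition \ref{grado}(2) are exactly what the paper leaves implicit with ``see \S \ref{plumap}''), and then count irreducible components of the sets $G_{\gamma,k,d}(X' \times \mathbb P^{M_{X}})$ by a B\'ezout-type estimate on their defining equations inside the Chow variety's projective embedding, deferring those degree and embedding-dimension bounds to the complexity computations of \S \ref{chowcomplexity} just as the paper's own proof does. The obstacle you flag (that the locally closed conditions cutting out $G_{\gamma,k,d}$ must not inflate the count) is precisely what the paper resolves in \S \ref{degreesofequations} via the functions $C(n,\gamma,M)$ and $D(n,\gamma,M,d')$ and the estimate $D^{C}$ for the number of components.
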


\begin{proof}

This is obtained  using
a general estimate (of B\'{e}zout type)
for the number of irreducible components
of an algebraic subset.
If $V\subset\mathbb P^{C}$ is defined by equations of degree $\leq D$,
the number of irreducible components of $V$ 
is bounded above by $D^{C}$. The same estimate holds more generally
for a locally closed subset in $\mathbb P^{C}$
of the form $U \cap V$, where $U$ is a Zariski open
subset and $V$ is an algebraic subset, defined by equations of degree $\leq D$.
This is applied to
the  disjoint union of varieties that is defined in theorem \ref{finite number}.

The following will be shown later on in \S  \ref{degreesofequations}:

\begin{itemize}
\item
the embedding dimension of $\chow_{n,\gamma}(X' \times \mathbb P^{M})$
is bounded by some function $C(n,\gamma,M)$,
\item
the algebraic subset $G_{\gamma,k,d}(X' \times \mathbb P^{M})$
is defined by equations whose
degree is bounded by some function $D(n,\gamma,M,d')$,
\end{itemize}
where $d'$ may be taken to be  $\deg X'$, and
moreover these bounds are effectively computable.

It follows that
the number of irreducible components of 
$G_{\gamma,k,d}(X' \times \mathbb P^{M})$ is bounded upperly 
by $D(n,\gamma,M,d')^{C(n,\gamma,M)}$.
The sum over $\gamma,k,d$  is bounded by
$$(\gamma_{X})^3\, D(n,\gamma_{X},M_{X},d')^{C(n,\gamma_{X},M_{X})}.$$
Finally recall that $M_{X}$ and $\gamma_{X}$ and $d'$ are bounded in terms of the 
canonical volume $v_{X}$ and the function $r_{n}$,
see \S \ref{plumap}.
\end{proof}

\begin{rem} \em
The result of Heier \cite{H},
although restricted to morphisms,
is really effective, based on some effective
result of uniform pluricanonical veryampleness,
of the same author. Concerning
the effectivity of the function $r_{n}$,
that is still unknown, see Hacon-McKernan \cite{HK}
and Chen \cite{Chen}.
\end{rem}

\section{Remarks on the complexity of Chow varieties}
\label{chowcomplexity}

In this section we give the proofs of two points
that were used in the proof of proposition \ref{effective}.
The complete details are indeed so cumbersome, and we
prefer to concentrate on the method, especially on a few
points involving the use of Chow varieties, rather than
working out explicit formulas. 
Moreover we believe that the present results
will  be improved and simplified if  a different 
parametrization is used for rational maps,
that we plan to study in a subsequent paper.

\subsection{On elimination of variables}
\label{eliminazione}

As a basic tool we need some estimate of
the complexity of the process of
elimination of variables. This follows from known estimates
of the complexity of Gr\"obner bases of a polynomial ideal,
since such a basis always contains a basis
of the resultant ideal (also called the eliminant ideal).
We refer to the paper of Dub\'e \cite{D}
for the explicit results. Thus the following holds.
\medskip

Let $f_1,\ldots,f_p$ be homogeneous polynomials
of degree $\leq d$  in the variables
$x_0,\ldots,x_r$  with indeterminate coefficients. 
Then the resultant ideal, after elimination of the variables $x_i$,  is
generated by homogeneous polynomials of degree $\leq \delta(r,d)$ in
the coefficients of the polynomials $f_j$, where $\delta$
is some suitable integer function, effectively computable.
\medskip

The following corollaries are easily deduced.
\medskip

If the indeterminate coefficients of  $f_1,\ldots,f_p$ 
are specialized as homogeneous polynomials
of degree $\leq d$ in a new set of variables $y_1,\ldots,y_r$, 
then the resultant ideal, after elimination of the variables $x_i$,  is
generated by homogeneous polynomials of degree 
$\leq \delta'(r,d) := d\, \delta(r,d)$ in
the variables $y_j$.

If $f_1,\ldots,f_p$ are multi-homogeneous polynomials 
of degree  $\leq d$ depending on sequences
$x_1,  \ldots,x_k$  of $r+1$ variables each, with indeterminate coefficients,
then the resultant ideal, after elimination of $x_1,\ldots,x_k$,
is generated by homogeneous polynomials 
in the coefficients of $f_1,\ldots,f_p$, of degree 
at most equal to
$$\delta(k,r,d) := \delta'(r,\delta'(r,\ldots, \delta'(r,d)) 
\underset{k}{\ldots}) ).$$

\subsection{Chow varieties and their equations}

Standard references are \cite{Chow} or \cite{Kol}.
Let $Z$ be a subvariety in $\Bbb P^r=\Bbb P(V)$
of dimension $n$ and degree $k$.
We denote with the symbol $V^\vee$ the dual vector space.
There is an irreducible polynomial $F_Z(u_0,\ldots,u_n)$,
homogeneous of degree $k$  with respect to each variable 
$u_i\in V^{\vee}$, such that $F_Z(u_0,\ldots,u_n) = 0$  if and only if
the linear space $u_0(x)=\cdots=u_n(x)=0$ meets $Z$. This polynomial
$F_Z$, which is  unique up to proportionality, is called the {\em associated form}
of the variety $Z$, and its  coefficients are said to be the coordinates of $Z$. The
associated form of a positive cycle $Z=\sum a_iZ_i$ of pure dimension $n$ is defined as
$F_Z=\prod F_{Z_i}^{a_i}$. 

Let  $\Bbb F_{n,k,r}$ be the projective space of multihomogeneous
polynomials of degree $k$ in $u_0,\ldots,u_n$. 
The dimension of this space is a function
$$\varphi(n,k,r)$$
that is effectively computable, by elementary algebra.

If $S \subset \Bbb P^r$ is an algebraic subset,
then in  $\Bbb F_{n,k,r}$
the subset $\chow_{n,k}(S)$
of associated forms $F_Z$ of cycles
$Z$ of dimension $n$ and degree $k$ supported in $S$ is an
algebraic subset, called the {\em Chow variety} of $S$,
relative to the pair $n,k$.
Equations for the Chow variety are obtained from
the following characterization of associated forms.
Assume that $S$ in $\mathbb P^r$ is 
defined by equations $f_\alpha(x)=0$, of degree at most $d'$.

Necessary and sufficient conditions for
a multihomogeneous form  $F(u_0,u_1,\ldots,u_n)$
so that it is associated to some $n$-cycle
supported in $S \subset \mathbb P^{r}$ are: 

\begin{enumerate}
\item[]  for every $u_1,\ldots,u_n \in V^\vee$ 
there are $x_1,\ldots,x_k \in V$ such that: 
\item  $F(u_0,u_1,\ldots,u_n)$ \ and \ $u_0(x_1)\cdots u_0(x_k)$ \
           are proportional, as polynomials in $u_0$;  
\item $f_\alpha(x_j)=0$ \ for $j=1,\ldots,k$ and for every $\alpha$;
\item  $u_i(x_j)=0$ \  for $i=1,\ldots,n$ and $j=1,\ldots,k$; 
\item  for every $x_j$ and every $v_0,v_1,\ldots,v_n$ passing through
          $x_j$ one has \newline   $F(v_0,v_1,\ldots,v_n)=0$. 
\end{enumerate} 

We now describe
how these conditions can be translated into equations.
The proportionality condition  1 
is  bilinear in the two polynomials.
Every linear form passing through a point $x$ can be expressed as
$x\cdot s:=s(x,-)$ where $s$ is an  antisymmetric bilinear form.
Writing then
$F(x\cdot s_0,x\cdot s_1,\ldots,x\cdot s_n)=\sum\; \varphi_{\alpha}(F,x) 
\;M_{\alpha}(s_0,s_1,\ldots,s_n)$, 
condition 4 turns out to be equivalent to:
$\varphi_{\alpha}(F,x_j)=0$ for every $j$ and every $\alpha$.
The polynomials $\varphi_{\alpha}$ are linear in $F$ 
and of degree $\bar k=k(n+1)$
with respect to the variable $x$. 
The degrees of equations are collected in the following table.
$$\begin{array}{l|cccc}
     & F & u_1,\ldots,u_n & x_1,\ldots,x_k \\ \cline{1-4}
1  & 1 & k,\ldots,k & 1,\ldots,1 \\
2  & - & - & d' \\
3  & - & 1 & 1 \\
4  & 1 & - & \bar k,\ldots, \bar k
\end{array}$$
 
Eliminating  $x_1,\ldots,x_k$ from the equations above
yields some finite system of equations of the form
$P(F, u_1,\ldots,u_n)=0$. 
It follows from \S \ref{eliminazione} that the function
$$\Delta(n,k,r,d') := \delta(k,r,\max \{d', k(n+1)\})$$ 
is an upper bound for the degrees of 
polynomials $P$ with respect to $F$. 

The conditions so that $F$ is an associated form are therefore
equivalent to requiring that: 
for every $u_1,\ldots,u_n$ the sequence $F,\, u_1,\ldots,u_n$
satisfies the resultant equations $P$. Writing
$P(F, u_1,\ldots,u_n) = \sum\; P_{\alpha}(F)\, M_{\alpha}(u)$,
we obtain equations 
$P_{\alpha}(F)=0$  for every $\alpha$ and every $P$,
which define the Chow variety.
Note that the degree of a polynomial $P_{\alpha}$ coincides with
the degree of the corresponding $P$ with respect to the variable $F$. 
Hence all these degrees are bounded by the same function
$\Delta(n,k,r,d')$.

\subsection{On Chow forms of graphs and direct images}
\label{degreesofequations}

We now work with cycles $\Gamma$ 
in a product variety $X' \times \mathbb P^M \subset
\mathbb P^N \times \mathbb P^M$.
Define $\overline M := (N+1)(M+1)-1$,
the dimension $N$ being fixed.
Let $d'$ be an integer such that 
$X'$ admits equations of degree at most $d'$,
for instance  $\deg X'$ is such an integer.
Then $X' \times \mathbb P^{M}$
also admits equations of degree at most $d'$. 
Because of the analysis in the preceding section,
the Chow variety $\chow_{n,\gamma}(X' \times \mathbb P^M)$
is embedded in a projective space of dimension
$$C(n,\gamma,M) := \varphi(n,\gamma, \overline M)$$
and admits equations of degree bounded by
$$D(n,\gamma,M,d') := \Delta(n,\gamma, \overline M,d').$$
We describe how the Chow form of $p_{*}(\Gamma)$
depends on the Chow form of $\Gamma$. 

In the product
$(\mathbb P^{N} \times \mathbb P^{M}) 
\times (\mathbb P^{M})^{*} \times 
\underset{n+1}{\cdots} \times (\mathbb P^{M})^{*}$
consider the incidence subset
$\Phi$ consisting of all sequences
 $(x,y), v_{0}, \ldots, v_{n}$ such that
\begin{itemize}
\item[5.]  $x \otimes y$ belongs to $\Gamma$,
\item[6.]  $v_{i}(y) =0$ \ for $i=0,\ldots,n$,
\end{itemize}
and denote by $q$ the natural projection to
$(\mathbb P^{M})^{*} \times 
\underset{n+1}{\cdots} \times (\mathbb P^{M})^{*}$.
It is easily seen that
$\Phi$ is irreducible of dimension $(n+1)M -1$.
The equations of the subscheme $q(\Phi)$
are obtained by eliminating the variables $x,y$
from the equations above.
The degree of  $q:\Phi \rightarrow q(\Phi)$ is equal to
the degree of  $p: \Gamma \rightarrow Y'$,
provided of course it is a finite degree.
Therefore $q(\Phi)$ is of the same dimension as $\Phi$ if and only if 
$Y'$ is of dimension $n$. In this case the cycle
$q_{*}(\Phi)$
is a hypersurface in 
$(\mathbb P^{M})^{*} \times 
\underset{n+1}{\cdots} \times (\mathbb P^{M})^{*}$,
and is defined by some multihomogeneous polynomial
$G(v_{0}, \ldots, v_{n})$
of the same degree $k$ with respect to each $v_{i}$.
This is the Chow form of $p_{*}(\Gamma)$.

The conditions above may be written as equations
involving the Chow form $F$ of the cycle $\Gamma$ 
and the other variables.
The degrees of these equations are:
$$\begin{array}{l|cccc}
 & F & v_{0},\ldots, v_{n} & x, y \\ \cline{1-4}
\rm 5 & 1 & - & \bar\gamma, \bar\gamma \\
\rm 6 & - & 1 & -,1 \\
\end{array}$$
where $\bar\gamma = \gamma (n+1)$.
Eliminating  $x,y$ from the equations above
one obtains a finite set of equations of the form
$G_\alpha(F,v_{0}, \ldots, v_{n})=0$.
Applying \ref{eliminazione} one sees that
the degrees relative to $F$ are bounded by 
the same function $D(n,\gamma,M,d')$.

The algebraic subfamily $G_{\gamma}(X', \mathbb P^{M})$
of those $\Gamma$ for which $Y'$ still is of dimension $n$
is defined by the condition that all these forms in $v_{0}, \ldots, v_{n}$ 
generate a principal ideal.
In an irreducible component of this subset one has therefore
a single form  $G(F,v_{0}, \ldots, v_{n})$
of the same degree $k$ with respect to each $v_i$,
and whose degree  with respect to $F$
is bounded by $D(n,\gamma,M,d')$.
The algebraic subfamily  $G_{\gamma,k,d}(X', \mathbb P^{M})$ 
of those $F$ such that the form
$G(F) := G(F,-, \ldots, -)$ 
is a $d$-th power  is therefore defined by 
equations arising from the quadratic equations $Q_{\nu}(G)=0$
of the $d$-ple Veronese variety by specialization as
$Q_{\nu}(G(F))=0$,
and are of degree $\leq 2 D(n,\gamma,M,d')$.

We may assume, as a feedback, that the coefficient $2$
was already included in the definition of the function $D$.

\newpage

\noindent {\sc Lucio Guerra }\\
Dipartimento di Matematica e Informatica, Universit\`a di Perugia\\
Via Vanvitelli 1, 06123  Perugia, Italia\\
{\tt guerra@unipg.it}
\bigskip

\noindent {\sc Gian Pietro Pirola}\\
Dipartimento di Matematica, Universit\`a di Pavia\\
via Ferrata 1, 27100 Pavia, Italia\\
{\tt gianpietro.pirola@unipv.it}

\end{document}